\newcommand{\CC}{\mathbb{C}}
\newcommand{\KP}{\mathbb{KP}}
\newcommand{\Tr}{\mathrm{Tr}}
\newcommand{\cop}{\Delta}
\newcommand{\cou}{\varepsilon}
\newcommand{\hw}{\int_{\KP}\!\!}
\newcommand{\esp}{\mathbb{E}}
\newcommand{\Zn}{\mathbb{Z}_n}
\newcommand{\hwn}{\int_{\KP_n}\!\!}
\newcommand{\uD}{\mathbb{D}}
\newcommand{\uT}{\mathbb{T}}
\newcommand{\indic}{\mathds{1}}
\newcommand{\Xal}[2]{\mathfrak{X}_{#1,#2}}
\newcommand{\Xuv}[2]{\mathfrak{X}^{#1,#2}}
\DeclareMathOperator{\dplus}{\dot{+}}
\DeclareMathOperator{\alg}{*-alg}
\DeclareMathOperator{\id}{id}
\DeclareMathOperator{\LinSpan}{span}
\newtheorem{theorem}{Theorem}[section]
\newtheorem{lemma}{Lemma}[section]
\newtheorem{proposition}{Proposition}[section]
\newtheorem{definition}{Definition}[section]
\newtheorem{remark}{Remark}[section]
\begin{document}

\markboth{Isabelle Baraquin}
{Trace of Powers of Representations of Finite Quantum Groups}

%
%

\title{TRACE OF POWERS OF REPRESENTATIONS OF FINITE QUANTUM GROUPS}

\author{\footnotesize ISABELLE BARAQUIN}

\address{Laboratoire de math\'ematiques de Besan\c{c}on, UMR CNRS 6623\\Universit\'e Bourgogne Franche-Comt\'e\\
16 route de Gray\\
25030 Besan\c{c}on cedex\\
France}
\email{isabelle.baraquin@univ-fcomte.fr}

\begin{abstract}
In this paper we study (asymptotic) properties of the $*$-distribution of irreducible characters of finite quantum groups. We proceed in two steps, first examining the representation theory to determine irreducible representations and their powers, then we study the $*$-distribution of their trace with respect to the Haar measure. For the Sekine family we look at the asymptotic distribution (as the dimension of the algebra goes to infinity).\\
Keywords: finite quantum groups; representation theory; asymptotic $*$-distribution.\\
{\footnotesize 2010 Mathematics Subject Classification: 20G42,  81R50}
\end{abstract}

\maketitle


\section{Introduction}

In \cite{shah} and then in \cite{evans}, Diaconis, Shahshahani and Evans show that the traces of powers of a random unitary (respectively orthogonal) matrix behave asymptotically like independent complex (resp. real) Gaussian random variables. Later, Banica, Curran and Speicher investigate the case of easy quantum groups in \cite{BCS2011, BCS2012}, and obtain similar results in the context of free probability. 

Compact quantum groups are a generalization of classical compact groups. They were introduced in order to extend the Pontryagin duality. Despite of their name of group, we are studying algebras endowed with an additional structure. For a more complete presentation, the reader can look at \cite{MvD} or \cite{NeshveyevTuset}.

\begin{definition}[Woronowicz]
A compact quantum group is a pair  $\mathbb{G} = (\mathcal{A}, \cop)$, where $\mathcal{A}$ is a unital $C^*$-algebra (eventually noncommutative) and $\cop \colon \mathcal{A} \to \mathcal{A}\otimes \mathcal{A}$ is a unital $*$-homomorphism, called coproduct, such that it is coassociative, i.e.
\[\left( \cop \otimes \id_{\mathcal{A}} \right) \circ \cop = \left( \id_{\mathcal{A}} \otimes \cop \right) \circ \cop\]
as $*$-homomorphisms from $\mathcal{A}$ to $\mathcal{A} \otimes \mathcal{A} \otimes \mathcal{A}$, and such that it verifies the density in $\mathcal{A} \otimes \mathcal{A}$ of the two algebras $(1_{\mathcal{A}} \otimes \mathcal{A}) \cop(\mathcal{A})  = \LinSpan\left\{(1_{\mathcal{A}} \otimes a)\cop(b) , a, b \in \mathcal{A}\right\}$ and $(\mathcal{A} \otimes 1_{\mathcal{A}})\cop(\mathcal{A}) = \LinSpan\left\{(a \otimes 1_{\mathcal{A}})\cop(b) , a, b \in \mathcal{A}\right\}$.
\end{definition}

We will say that the quantum group is cocommutative if the coproduct is symmetric, i.e.\ if $\sigma \circ \cop = \cop$ where $\sigma(a \otimes b) = b \otimes a$, for all $a, b \in \mathcal{A}$. Note that we consider unital, associative and involutive algebras over the field of complex numbers and that the tensor products of algebras are algebraic tensor products over $\CC$.

Note that we can define a quantum group thanks to a classical compact group $(G,\cdot)$. The set of complex-valued continuous functions on $G$, denoted $C(G)$, is endowed with a structure of (commutative) unital $C^*$-algebra. Identifying the tensor product $C(G) \otimes C(G)$ with $C(G \times G)$, it is easy to check that the application $\cop \colon C(G) \to C(G \times G)$, defined by $\cop(f)(s,t) = f(s\cdot t)$ satisfies the coassociativity relation and the density properties, called quantum cancellation rules.

Following this example, we also denote the algebra $\mathcal{A}$ of the quantum group $\mathbb{G} = (\mathcal{A}, \cop)$ by $C(\mathbb{G})$, and a compact quantum group is called finite when the algebra is finite dimensional.

In this article, we will look at finite quantum groups. They were introduced in the sixties as examples of Hopf-von Neumann algebras to recover symmetry in duality for non abelian locally compact groups. The eight-dimensional Kac-Paljutkin quantum group $\KP$, introduced in \cite{KPtrad} is the smallest non-trivial example, in the sense it is neither commutative nor cocommutative. In $1996$, Sekine defines a new family of finite quantum groups \cite{sekine}, of dimension $2n^2$ for all $n \geq 2$.

To study a group, it is sometimes useful to look at its action on a complex Hilbert space $\mathcal{H}$. This is a representation of the group $\pi \colon G \to B(\mathcal{H})$, where $B(\mathcal{H})$ denotes the space of bounded operators on $\mathcal{H}$. When the group is compact, we can see $\pi$ as an element of $B(\mathcal{H}) \otimes C(G)$. In this framework, the property
\[\forall s, t \in G, \ \pi(s \cdot t) = \pi(s) \pi(t)\]
becomes $(\id_{B(\mathcal{H})} \otimes \cop)\pi = J_{12}(\pi) J_{13}(\pi)$, where for every $a \otimes f \in B(\mathcal{H}) \otimes C(G)$, $J_{12}(a \otimes f) = a \otimes f \otimes \indic_G$ and $J_{13}(a \otimes f) = a \otimes \indic_G \otimes f$ in $B(\mathcal{H}) \otimes C(G) \otimes C(G)$ and $\indic_G$ denotes the constant function $1$ on $G$, the unit in $C(G)$.
\begin{definition}
Let $\mathbb{G} = (C(\mathbb{G}), \cop)$ be a compact quantum group. A corepresentation of the algebra $C(\mathbb{G})$, also called a representation of the quantum group $\mathbb{G}$, is an invertible element $u$ of $B(\mathcal{H}) \otimes C(\mathbb{G})$, for some complex Hilbert space $\mathcal{H}$, such that $(\id_{B(\mathcal{H})} \otimes \cop) u = J_{12}(u)J_{13}(u)$ in $B(\mathcal{H}) \otimes C(\mathbb{G}) \otimes C(\mathbb{G})$.
\end{definition}

We will say that a representation $u \in B(\mathcal{H}) \otimes C(\mathbb{G})$ of the compact quantum group $\mathbb{G}$ is unitary if $u$ is a unitary element of $B(\mathcal{H}) \otimes C(\mathbb{G})$, finite dimensional if the Hilbert space $\mathcal{H}$ is finite dimensional.

Thus, once a basis of $\mathcal{H}$ is fixed, we can see a finite dimensional representation as an element of $\mathcal{M}_n(\CC) \otimes C(\mathbb{G}) \simeq \mathcal{M}_n\left(C(\mathbb{G})\right)$, i.e.\ a matrix $u = (u_{ij})_{1 \leq i,j \leq n}$ with coefficients in the algebra of the quantum group satisfying $\cop(u_{ij}) = \sum\limits_{k} u_{ik} \otimes u_{kj}$.

\begin{definition}
Assume $u \in B(\mathcal{H}_u) \otimes C(\mathbb{G})$ and $v \in B(\mathcal{H}_v) \otimes C(\mathbb{G})$ are representations of the compact quantum group $\mathbb{G}$. We say that an operator $T \colon \mathcal{H}_u \to \mathcal{H}_v$ intertwines $u$ and $v$ if
\[(T \otimes 1_{C(\mathbb{G})}) u = v (T \otimes 1_{C(\mathbb{G})}) \text{ .}\]
If such an operator exists, $u$ and $v$ are called equivalent. If moreover $T$ is unitary, they are called unitarily equivalent.

A representation $u$ is called irreducible if there is no intertwiners between $u$ and $u$ itself except scalar multiples of $\id_{\mathcal{H}}$.
\end{definition}

In particular irreducible representations of compact quantum groups are finite dimensional. An irreducible representation $u$ of a compact quantum group $\mathbb{G}$ will be called fundamental if its coefficients $u_{ij}$ generate the algebra $C(\mathbb{G})$.

Up to equivalence, we can consider only unitary irreducible (finite dimensional) representations. With these objects, we will use the following notation:
\begin{definition}
Let $M$ be a matrix whose coefficients $M_{i,j}$ are elements of an algebra $\mathcal{A}$ (eventually noncommutative). The trace of $M$, denoted $\chi(M)$, is the sum of all its diagonal elements, that is $\chi(M) = \sum\limits_{i} M_{ii}$ in $\mathcal{A}$.
\end{definition}

Moreover, every classical compact group can be equipped with a Haar measure. If we look at integration of continuous functions with respect to this measure, we obtain a state on the group algebra. The identity
\[\int_G f(g \cdot s)\,\mathrm{d}\lambda(s) = \int_G f(s)\,\mathrm{d}\lambda(s) = \int_G f(s \cdot g)\,\mathrm{d}\lambda(s)\]
becomes $(\lambda \otimes \id_{C(G)}) \circ \cop = \lambda(\cdot) \indic_G = (\id_{C(G)} \otimes \lambda) \circ \cop$, with the coproduct defined above, where $\lambda$ denotes the integration with respect to the Haar measure. This leads us to the following definition:
\begin{definition}
A Haar state $h$ on a compact quantum group $\mathbb{G} = (\mathcal{A}, \cop)$ is a state on $\mathcal{A}$ such that
\[(h \otimes \id_{\mathcal{A}})\circ \cop = h(\cdot) 1_{\mathcal{A}} = (\id_{\mathcal{A}} \otimes h)\circ \cop\text{ .}\]
\end{definition}

Note that every compact quantum group admits a unique Haar state. We will use it to determine $*$-distribution of elements of the algebra, in the following sense. Let us denote by $\CC\langle X, X^* \rangle$ the set of polynomials with two (noncommutative) variables $X$ and $X^*$.

\begin{definition}
Let $a$ be an element of the compact quantum group $\mathbb{G} = (\mathcal{A}, \cop)$, endowed with the Haar state $h$. The $*$-distribution of $a$ is the linear functional $\mu_a \colon \CC\langle X, X^* \rangle \to \CC$ given by
\[\mu_a\left(X^{\varepsilon_1} \ldots X^{\varepsilon_n}\right) = h(a^{\varepsilon_1} \ldots a^{\varepsilon_n})\]
for all natural number $n$ and every choice of $\varepsilon_1, \ldots, \varepsilon_n \in \{\emptyset, *\}$, where $a^{\varepsilon_1} \ldots a^{\varepsilon_n}$ denotes the corresponding product of $a = a^{\emptyset}$ and $a^*$ in $\mathcal{A}$. The $h(a^{\varepsilon_1} \ldots a^{\varepsilon_n})$'s are called the moments of $a$.
\end{definition}

We will compare these $*$-distributions with classical probability distributions. For instance, the Dirac mass probability distribution $\delta_x$ in some real number $x$, the arcsine distribution $\mu_{arc(x,y)}$ on the interval $[x\;;\;y]$, or the uniform distribution $\mathcal{U}(x\uT)$ on a multiple of the complex unit circle $\uT$.

We will also look at joint distribution of several elements, defined in a similar way. In this article, we say that the elements are independent when the joint distribution correspond to the joint distribution of classical independent random variables. We consider asymptotic distribution as convergence in moments.

This work is separated into two parts. The first section is devoted to the study of the Kac-Paljutkin quantum group $\KP$. After recalling the definition, we will determine its five irreducible representations. Finally, we determine the $*$-distribution of the trace of powers of the fundamental representation in Theorem \ref{ThDist} and study their independence in Theorem \ref{ThInd}.

In the second section, we work with the family of Sekine quantum groups. After recalling the definition, we give the representation theory. We also study the character space and a commutative subalgebra. Finally, we determine the asymptotic $*$-distribution of the trace of powers of two-dimensional representations in Theorems \ref{ThOdd} and \ref{ThEven}.

\section{Kac-Paljutkin Finite Quantum Group $\KP$}

\subsection{Definition}

We will follow the definition of \cite{uweKP}, but for convenience of the reader, we recall here the notations.

Consider the multi-matrix algebra $\mathcal{A} = \CC \oplus \CC \oplus \CC \oplus \CC \oplus \mathcal{M}_2(\CC)$ together with usual multiplication and involution. This is an eight-dimensional algebra, with the canonical basis
\begin{eqnarray*}
e_1 = 1 \dplus 0 \dplus 0 \dplus 0 \dplus \begin{pmatrix} 0 & 0 \\ 0 & 0 \end{pmatrix} & \ \ \ \ \ \ & E_{11} = 0 \dplus 0 \dplus 0 \dplus 0 \dplus \begin{pmatrix} 1 & 0 \\ 0 & 0 \end{pmatrix}\\
e_2 = 0 \dplus 1 \dplus 0 \dplus 0 \dplus \begin{pmatrix} 0 & 0 \\ 0 & 0 \end{pmatrix} & \ \ \ \ \ \ & E_{12} = 0 \dplus 0 \dplus 0 \dplus 0 \dplus \begin{pmatrix} 0 & 1 \\ 0 & 0 \end{pmatrix}\\
e_3 = 0 \dplus 0 \dplus 1 \dplus 0 \dplus \begin{pmatrix} 0 & 0 \\ 0 & 0 \end{pmatrix} & \ \ \ \ \ \ & E_{21} = 0 \dplus 0 \dplus 0 \dplus 0 \dplus \begin{pmatrix} 0 & 0 \\ 1 & 0 \end{pmatrix}\\
e_4 = 0 \dplus 0 \dplus 0 \dplus 1 \dplus \begin{pmatrix} 0 & 0 \\ 0 & 0 \end{pmatrix} & \ \ \ \ \ \ & E_{22} = 0 \dplus 0 \dplus 0 \dplus 0 \dplus \begin{pmatrix} 0 & 0 \\ 0 & 1 \end{pmatrix}
\end{eqnarray*}
where $\dplus$ is defined in a natural way to designate elements in the direct sum. The unit is naturally $\mathds{1} = 1 \dplus 1 \dplus 1 \dplus 1 \dplus \begin{pmatrix} 1 & 0 \\ 0 & 1 \end{pmatrix} = e_1 + e_2 + e_3 + e_4 + E_{11} + E_{22}$.

The following defines the coproduct, where $\imath$ is the imaginary unit:
\begin{align*}
\cop(e_1) = e_1 &\otimes e_1 + e_2 \otimes e_2 + e_3 \otimes e_3 + e_4 \otimes e_4\\
& + \frac{1}{2} E_{11} \otimes E_{11} + \frac{1}{2} E_{12} \otimes E_{12} + \frac{1}{2} E_{21} \otimes E_{21} + \frac{1}{2} E_{22} \otimes E_{22}\\
\cop(e_2) = e_1 &\otimes e_2 + e_2 \otimes e_1 + e_3 \otimes e_4 + e_4 \otimes e_3\\
& + \frac{1}{2} E_{11} \otimes E_{22} + \frac{1}{2} E_{22} \otimes E_{11} - \frac{\imath}{2} E_{12} \otimes E_{21} + \frac{\imath}{2} E_{21} \otimes E_{12}\\
\cop(e_3) = e_1 &\otimes e_3 + e_3 \otimes e_1 + e_2 \otimes e_4 + e_4 \otimes e_2\\
& + \frac{1}{2} E_{11} \otimes E_{22} + \frac{1}{2} E_{22} \otimes E_{11} + \frac{\imath}{2} E_{12} \otimes E_{21} - \frac{\imath}{2} E_{21} \otimes E_{12}\\
\cop(e_4) = e_1 &\otimes e_4 + e_4 \otimes e_1 + e_2 \otimes e_3 + e_3 \otimes e_2\\
& + \frac{1}{2} E_{11} \otimes E_{11} + \frac{1}{2} E_{22} \otimes E_{22} - \frac{1}{2} E_{12} \otimes E_{12} - \frac{1}{2} E_{21} \otimes E_{21}\\
\cop(E_{11}) = e_1 &\otimes E_{11} + E_{11} \otimes e_1 + e_2 \otimes E_{22} + E_{22} \otimes e_2\\
& + e_3 \otimes E_{22} + E_{22} \otimes e_3 + e_4 \otimes E_{11} + E_{11} \otimes e_4\\
\cop(E_{12}) = e_1 &\otimes E_{12} + E_{12} \otimes e_1 + \imath e_2 \otimes E_{21} - \imath E_{21} \otimes e_2\\
& - \imath e_3 \otimes E_{21} + \imath E_{21} \otimes e_3 - e_4 \otimes E_{12} - E_{12} \otimes e_4\\
\cop(E_{21}) = e_1 &\otimes E_{21} + E_{21} \otimes e_1 - \imath e_2 \otimes E_{12} + \imath E_{12} \otimes e_2\\
& + \imath e_3 \otimes E_{12} - \imath E_{12} \otimes e_3 - e_4 \otimes E_{21} - E_{21} \otimes e_4\\
\cop(E_{22}) = e_1 &\otimes E_{22} + E_{22} \otimes e_1 + e_2 \otimes E_{11} + E_{11} \otimes e_2\\
& + e_3 \otimes E_{11} + E_{11} \otimes e_3 + e_4 \otimes E_{22} + E_{22} \otimes e_4
\end{align*}
the counit is given by $\cou \left(x_1 \dplus x_2 \dplus x_3 \dplus x_4 \dplus \begin{pmatrix}c_{11} & c_{12}\\c_{21} & c_{22}\end{pmatrix}\right) = x_1$ and the antipode is the transpose map, i.e.\ $S(e_i) = e_i$ and $S(E_{ij}) = E_{ji}$.

This defines a finite quantum group, denoted by $\KP = \left( \mathcal{A}, \cop\right)$. We shall also need its Haar state, denoted by $\hw$ :
\[\hw \left(x_1 \dplus x_2 \dplus x_3 \dplus x_4 \dplus \begin{pmatrix}c_{11} & c_{12}\\c_{21} & c_{22}\end{pmatrix}\right) = \frac{1}{8} \left( x_1 + x_2 + x_3 + x_4 + 2 (c_{11} + c_{22}) \right) \text{ .}\]

\subsection{The group of group-like elements}

A group-like element is a non-zero elements of $\mathcal{A}$ such that $\cop(g) = g \otimes g$.
Group-like elements satisfying $\cou(g) = 1$, i.e.\  $g_1 = 1$, correspond to one dimensional representations of $\KP$. Moreover, they form a group, with inverse given by the antipode. In particular, the unit $\indic$, called the trivial representation, satisfies these conditions.

By direct calculation, using the conditions above and the linear basis, the family of group-like elements of $\KP$ is a group isomorphic to $\mathbb{Z}_2 \times \mathbb{Z}_2$:
\begin{multline}
\left\{1 \dplus 1 \dplus 1 \dplus 1 \dplus \begin{pmatrix}1&0\\0&1\end{pmatrix}, \ 1 \dplus 1 \dplus 1 \dplus 1 \dplus \begin{pmatrix}-1&0\\0&-1\end{pmatrix}, \right.\\ \left. 1 \dplus -1 \dplus -1 \dplus 1 \dplus \begin{pmatrix}1&0\\0&-1\end{pmatrix}, \ 1 \dplus -1 \dplus -1 \dplus 1 \dplus \begin{pmatrix}-1&0\\0&1\end{pmatrix}\right\}\text{ .}
\label{grplike}
\end{multline}

\begin{remark} \label{rq1}
Note that in \cite{Izumi}, Izumi and Kosaki do not follow the same way to define $\KP$. They give group-like elements which look a little bit different from ours. The fact is that they do not use the same basis: in their notation, $z(a)$, $z(b)$ and $z(c)$ play respectively the role of our $e_4$, $e_2$ and $e_3$.
\end{remark}

\subsection{Matrix elements and fundamental representation}

Let us look at representations of $\KP$ with dimension at least $2$. We will determine matrix elements of representation of dimension $2$, i.e.\ elements $X_{(11)}$, $X_{(12)}$, $X_{(21)}$, $X_{(22)}$ of $\mathcal{A}$, such that $\cop(X_{(ij)}) = X_{(i1)} \otimes X_{(1j)} + X_{(i2)} \otimes X_{(2j)}$ and $\cou(X_{(ij)}) = \delta_{i,j}$ for all $i, j \in \{1,2\}$. These are matrix elements of two-dimensional representations of $\KP$.

\begin{proposition}
For all $a \in \{-1, 1\}$ and all $\lambda \in \mathbb{T}$, let us fix
\begin{align*}
\Xal{a}{\lambda} &= \begin{pmatrix}\mathfrak{X}_{(11)}&\mathfrak{X}_{(12)}\\\mathfrak{X}_{(21)}&\mathfrak{X}_{(22)}\end{pmatrix}\\
&= \begin{pmatrix}
1 \dplus a \dplus -a \dplus -1 \dplus \begin{pmatrix}0&0\\0&0\end{pmatrix} & 0 \dplus 0 \dplus 0 \dplus 0 \dplus \begin{pmatrix}0&\lambda\\\imath a \lambda&0\end{pmatrix}\\
0 \dplus 0 \dplus 0 \dplus 0 \dplus \begin{pmatrix}0&\bar{\lambda}\\-\imath a \bar{\lambda}&0\end{pmatrix} & 1 \dplus -a \dplus a \dplus -1 \dplus \begin{pmatrix}0&0\\0&0\end{pmatrix}
\end{pmatrix} \text{ .}
\end{align*}
Then $\Xal{a}{\lambda}$ is a fundamental representation of $\KP$, it means that its coefficients generate the algebra $\mathcal{A}$.
\end{proposition}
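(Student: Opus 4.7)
The plan is to check three separate ingredients in turn: the counit condition $\cou(\mathfrak{X}_{(ij)})=\delta_{i,j}$, the corepresentation identity $\cop(\mathfrak{X}_{(ij)})=\sum_k \mathfrak{X}_{(ik)}\otimes\mathfrak{X}_{(kj)}$ together with invertibility of the matrix, and finally the claim that the four entries generate $\mathcal{A}$ as a unital $*$-algebra. Two simplifications will enter every computation, namely $a^2=1$ and $|\lambda|^2=1$. A second structural observation will streamline the bookkeeping: $\mathfrak{X}_{(11)}$ and $\mathfrak{X}_{(22)}$ live entirely in the commutative summand $\CC\oplus\CC\oplus\CC\oplus\CC$, while $\mathfrak{X}_{(12)}$ and $\mathfrak{X}_{(21)}$ live entirely in the block $\mathcal{M}_2(\CC)$, so their products across these two blocks vanish, and their tensor products split accordingly into an $e$-part and an $E$-part that can be checked independently.

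The counit check is immediate from $\cou(e_1)=1$, $\cou(e_i)=0$ for $i\neq 1$, and $\cou(E_{ij})=0$. For the corepresentation identity, I would expand each $\cop(\mathfrak{X}_{(ij)})$ by linearity from the given coproduct formulas and compare term by term with $\mathfrak{X}_{(i1)}\otimes\mathfrak{X}_{(1j)}+\mathfrak{X}_{(i2)}\otimes\mathfrak{X}_{(2j)}$. Because of the block remark, the $e$-part of $\cop(\mathfrak{X}_{(11)})$ must match $\mathfrak{X}_{(11)}\otimes\mathfrak{X}_{(11)}$ (and is where $a^2=1$ comes in), while the $E$-part must match $\mathfrak{X}_{(12)}\otimes\mathfrak{X}_{(21)}$ (where $|\lambda|^2=1$ enters); the remaining three entries are handled identically. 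Invertibility will be obtained by showing that $\Xal{a}{\lambda}$ is unitary: the off-diagonal entries of $(\Xal{a}{\lambda})^*\Xal{a}{\lambda}$ vanish by the block argument, while each diagonal entry collapses to $(e_1+e_2+e_3+e_4)+(E_{11}+E_{22})=\indic$.

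For the generation step, the strategy is to produce each of the eight basis vectors of $\mathcal{A}$ from polynomial expressions in the four entries. The linear combinations $\mathfrak{X}_{(11)}\pm\mathfrak{X}_{(22)}$ give $2(e_1-e_4)$ and $2a(e_2-e_3)$, and their squares give $4(e_1+e_4)$ and $4(e_2+e_3)$, so the four minimal central projections $e_1,e_2,e_3,e_4$ are recovered. In the matrix block, $\mathfrak{X}_{(12)}^2$ yields a nonzero scalar multiple of $E_{11}+E_{22}$ while $\mathfrak{X}_{(12)}\mathfrak{X}_{(21)}$ yields a nonzero scalar multiple of $E_{11}-E_{22}$, from which $E_{11}$ and $E_{22}$ are extracted; then $E_{11}\mathfrak{X}_{(12)}=\lambda E_{12}$ and $E_{22}\mathfrak{X}_{(12)}=\imath a\lambda E_{21}$ produce $E_{12}$ and $E_{21}$ after scaling by $\bar\lambda$.

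The only real obstacle is organisational rather than conceptual: tracking the many factors of $\imath$, $a$, $\lambda$ and $\bar\lambda$ in the $E$-part of $\cop(\mathfrak{X}_{(11)})$ and $\cop(\mathfrak{X}_{(22)})$ is where a slip is easiest, so I would treat those two cases first and in full detail, and then observe that the $(1,2)$ and $(2,1)$ entries follow by exactly the same pattern.
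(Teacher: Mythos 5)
Your proposal is correct and follows essentially the same route as the paper: direct term-by-term verification of the corepresentation relations (with $a^2=1$ and $|\lambda|^2=1$ doing the work), unitarity of the matrix for invertibility, and explicit polynomial expressions in the four coefficients recovering all eight basis elements — your generation formulas differ only cosmetically from the paper's (you avoid adjoints by using $\mathfrak{X}_{(12)}^2$ and $\mathfrak{X}_{(12)}\mathfrak{X}_{(21)}$, the paper uses $\mathfrak{X}_{(12)}\mathfrak{X}_{(12)}^*$), and your block-splitting of the tensor computation is just a tidier bookkeeping of the same calculation. The one item the paper includes that you omit is the check $\hw\,\chi(\Xal{a}{\lambda})^*\chi(\Xal{a}{\lambda})=1$ giving irreducibility, which the paper's earlier definition of ``fundamental'' technically requires, although the proposition's own gloss (coefficients generate $\mathcal{A}$) is exactly what you proved.
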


\begin{remark}
Remark \ref{rq1}, about \cite{Izumi}, holds again. Moreover, since all $\Xal{a}{\lambda}$ are unitary equivalent, Izumi and Kosaki fix $a = -1$ and $\lambda = e^{\imath \frac{\pi}{4}}$. 
\end{remark}

\begin{proof}
First of all, let us check that $\mathfrak{X}_{(11)}$ is a matrix element of a two-dimensional representation of $\KP$. The computation for $\mathfrak{X}_{(12)}$, $\mathfrak{X}_{(21)}$ and $\mathfrak{X}_{(22)}$ are similar.
We have on the first hand
\begin{align*}
\cop(\mathfrak{X}_{(11)}) = e_1 &\otimes e_1 + e_2 \otimes e_2 + e_3 \otimes e_3 + e_4 \otimes e_4\\
& - \left(e_1 \otimes e_4 + e_4 \otimes e_1 + e_2 \otimes e_3 + e_3 \otimes e_2\right)\\
& + a\left(e_1 \otimes e_2 + e_2 \otimes e_1 + e_3 \otimes e_4 + e_4 \otimes e_3 \right)\\
& - a\left(e_1 \otimes e_3 + e_3 \otimes e_1 + e_2 \otimes e_4 + e_4 \otimes e_2\right)\\
& + E_{12} \otimes E_{12} + E_{21} \otimes E_{21} + \imath a E_{21} \otimes E_{12} - \imath a E_{12} \otimes E_{21}
\end{align*}
and, on the other hand
\begin{align*}
\mathfrak{X}_{(11)} \otimes \mathfrak{X}_{(11)} + \mathfrak{X}_{(12)} &\otimes \mathfrak{X}_{(21)} = e_1 \otimes e_1 + a e_1 \otimes e_2 -a e_1 \otimes e_3 - e_1 \otimes e_4\\
& + a \left(e_2 \otimes e_1 + a e_2 \otimes e_2 -a e_2 \otimes e_3 - e_2 \otimes e_4\right)\\
& - a\left(e_3 \otimes e_1 + a e_3 \otimes e_2 - a e_3 \otimes e_3 - e_3 \otimes e_4 \right)\\
& - \left(e_4 \otimes e_1 + a e_4 \otimes e_2 - a e_4 \otimes e_3 - e_4 \otimes e_4\right)\\
& + |\lambda|^2 E_{12} \otimes E_{12} -\imath a |\lambda|^2 E_{12} \otimes E_{21}\\
& + \imath a |\lambda|^2 E_{21} \otimes E_{12} + a^2 |\lambda|^2 E_{21} \otimes E_{21} \text{ .}
\end{align*}
Hence, $\cop(\mathfrak{X}_{(11)}) = \mathfrak{X}_{(11)} \otimes \mathfrak{X}_{(11)} + \mathfrak{X}_{(12)} \otimes \mathfrak{X}_{(21)}$.

Moreover, we can show that $\Xal{a}{\lambda}$ and $\overline{\Xal{a}{\lambda}}$ are unitary matrices and that we have $\hw\,\chi(\Xal{a}{\lambda})^* \chi(\Xal{a}{\lambda}) = 1$. Hence $\Xal{a}{\lambda}$ defines a unitary irreducible representation of the finite quantum group $\KP$.

Finally, the family $\left\{\mathfrak{X}_{(11)}, \mathfrak{X}_{(12)}, \mathfrak{X}_{(21)}, \mathfrak{X}_{(22)} \right\}$ generates $\mathcal{A}$, since
\begin{align*}
e_1 &= \frac{1}{4} \left( \mathfrak{X}_{(11)}^2 + \mathfrak{X}_{(11)} \mathfrak{X}_{(22)} \right) + \frac{1}{4} \left(\mathfrak{X}_{(11)} + \mathfrak{X}_{(22)} \right)\\
e_4 &= \frac{1}{4} \left( \mathfrak{X}_{(11)}^2 + \mathfrak{X}_{(11)} \mathfrak{X}_{(22)} \right) - \frac{1}{4} \left(\mathfrak{X}_{(11)} + \mathfrak{X}_{(22)} \right)\\
e_2 &= \frac{1}{4} \left( \mathfrak{X}_{(11)}^2 - \mathfrak{X}_{(11)} \mathfrak{X}_{(22)} \right) + \frac{a}{4} \left(\mathfrak{X}_{(11)} - \mathfrak{X}_{(22)} \right)\\
e_3 &= \frac{1}{4} \left( \mathfrak{X}_{(11)}^2 - \mathfrak{X}_{(11)} \mathfrak{X}_{(22)} \right) - \frac{a}{4} \left(\mathfrak{X}_{(11)} - \mathfrak{X}_{(22)} \right)\\
E_{11} &=\frac{1}{2} \left(\mathfrak{X}_{(12)} \mathfrak{X}_{(12)}^* + \imath a \mathfrak{X}_{(12)} \mathfrak{X}_{(21)} \right)\\
E_{22} &=\frac{1}{2} \left(\mathfrak{X}_{(12)} \mathfrak{X}_{(12)}^* - \imath a \mathfrak{X}_{(12)} \mathfrak{X}_{(21)} \right)\\
E_{12} &=\frac{1}{2} \left( \bar{\lambda} \mathfrak{X}_{(12)} + \lambda \mathfrak{X}_{(21)} \right)\\
E_{21} &=- \frac{\imath a}{2} \left( \bar{\lambda} \mathfrak{X}_{(12)} - \lambda \mathfrak{X}_{(21)} \right)\text{ .}
\end{align*}
\end{proof}

\subsection{Powers of fundamental representation}

To extend the study of Diaconis and Shahshahani in $\KP$, we compute the traces of powers of the fundamental representation. Once we have these elements, we compute their $*$-distribution in the finite quantum group $\KP$ with respect to the Haar state, and identify them with classical probability distributions determined by moments of all order.

To do this, we have to define the adjoint $x^*$ of an element $x$ in $\mathcal{A}$. It is given by the following formula:
\[\left(x_1 \dplus x_2 \dplus x_3 \dplus x_4 \dplus \begin{pmatrix}c_{11} & c_{12}\\c_{21} & c_{22}\end{pmatrix}\right)^* = \overline{x_1} \dplus \overline{x_2} \dplus \overline{x_3} \dplus \overline{x_4} \dplus \begin{pmatrix}\overline{c_{11}} & \overline{c_{21}}\\\overline{c_{12}} & \overline{c_{22}}\end{pmatrix} \text{ .}\]

First, let us compute the powers of $\Xal{a}{\lambda}$. Let $\{e_{ij}\}_{1 \leq i,j \leq 2}$ denote the family of matrix unit in $\mathcal{M}_2(\CC)$. Then
\[\Xal{a}{\lambda} = \sum_{i,j = 1}^2 e_{ij} \otimes \mathfrak{X}_{(ij)}\]
as an element of $\mathcal{M}_2(\CC) \otimes \mathcal{A}$. The usual multiplication in the tensor product of algebra gives, going back to the matrix notation,
\[\left(\Xal{a}{\lambda}\right)^2 = \begin{pmatrix}
1 \dplus 1 \dplus 1 \dplus 1 \dplus \begin{pmatrix} - \imath a&0\\0&\imath a\end{pmatrix}& 0 \dplus 0 \dplus 0 \dplus 0 \dplus \begin{pmatrix} 0&0\\0&0\end{pmatrix}\\
0 \dplus 0 \dplus 0 \dplus 0 \dplus \begin{pmatrix} 0&0\\0&0\end{pmatrix} & 1 \dplus 1 \dplus 1 \dplus 1 \dplus \begin{pmatrix} \imath a&0\\0&- \imath a\end{pmatrix}
\end{pmatrix} \text{ .}\]

By similar calculations, using $\left(\Xal{a}{\lambda}\right)^{2n} = \left[\left(\Xal{a}{\lambda}\right)^{2}\right]^n$, for all non negative integers $n$, $\left(\Xal{a}{\lambda}\right)^{2n}$ is a diagonal matrix and
\begin{equation}
\label{Lm1}
\left(\Xal{a}{\lambda}\right)^{2n} = \begin{pmatrix}
1 \dplus 1 \dplus 1 \dplus 1 \dplus \begin{pmatrix} (- \imath a)^n\hspace{-5pt}&0\\0&\hspace{-5pt}(\imath a)^n\end{pmatrix}\hspace{-10pt}& 0 \dplus 0 \dplus 0 \dplus 0 \dplus \begin{pmatrix} 0&0\\0&0\end{pmatrix}\\
0 \dplus 0 \dplus 0 \dplus 0 \dplus \begin{pmatrix} 0&0\\0&0\end{pmatrix} & \hspace{-10pt}1 \dplus 1 \dplus 1 \dplus 1 \dplus \begin{pmatrix} (\imath a)^n\hspace{-5pt}&0\\0&\hspace{-5pt}(- \imath a)^n\end{pmatrix}
\end{pmatrix}
\end{equation}

\begin{lemma}
\label{Lm2}
For all non negative integer $n$, we have $\chi\left(\left(\Xal{a}{\lambda}\right)^{2n+1}\right) = \chi\left(\Xal{a}{\lambda}\right)$ and $\chi\left(\left(\Xal{a}{\lambda}\right)^{2n}\right) = 2 \dplus 2 \dplus 2 \dplus 2 \dplus \left((\imath a)^n + (-\imath a)^n\right) I_2$, where $I_2$ is the identity matrix in $\mathcal{M}_2(\CC)$.
\end{lemma}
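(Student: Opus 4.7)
The plan is to handle the two cases separately, both by direct calculation that leverages formula (\ref{Lm1}) and the block structure of $\mathcal{A}$.

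For the even case, I would simply sum the two diagonal entries of the matrix on the right-hand side of (\ref{Lm1}). Each diagonal entry lies in $\mathcal{A} = \CC \oplus \CC \oplus \CC \oplus \CC \oplus \mathcal{M}_2(\CC)$, so the addition is component-wise: the four $\CC$-summands produce $1+1 = 2$, while the two $\mathcal{M}_2(\CC)$ blocks, $\mathrm{diag}\bigl((-\imath a)^n,(\imath a)^n\bigr)$ and $\mathrm{diag}\bigl((\imath a)^n,(-\imath a)^n\bigr)$, add to $\bigl((\imath a)^n + (-\imath a)^n\bigr) I_2$. This gives the stated formula for $\chi\bigl((\Xal{a}{\lambda})^{2n}\bigr)$ with essentially no work beyond reading off (\ref{Lm1}).

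For the odd case, I would write $(\Xal{a}{\lambda})^{2n+1} = \Xal{a}{\lambda}\cdot (\Xal{a}{\lambda})^{2n}$ as a product in $\mathcal{M}_2(\mathcal{A})$. Because (\ref{Lm1}) says $(\Xal{a}{\lambda})^{2n}$ is diagonal, the diagonal entries of the product are just $\mathfrak{X}_{(ii)}\cdot [(\Xal{a}{\lambda})^{2n}]_{ii}$ for $i=1,2$. The key observation is then that $\mathfrak{X}_{(11)}$ and $\mathfrak{X}_{(22)}$ are supported only on the abelian part $\CC^4$ of $\mathcal{A}$ (their $\mathcal{M}_2(\CC)$ blocks vanish), while the $\CC^4$-component of each $[(\Xal{a}{\lambda})^{2n}]_{ii}$ is the unit $1\dplus 1\dplus 1\dplus 1$. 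Hence each product simplifies to $\mathfrak{X}_{(ii)}$, and summing the two diagonal entries yields $\chi(\Xal{a}{\lambda})$.

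There is no real obstacle here: the content is book-keeping on the direct-sum decomposition of $\mathcal{A}$ once (\ref{Lm1}) is in hand. The only care needed is to observe, in the odd case, that the vanishing of the $\mathcal{M}_2(\CC)$ parts of $\mathfrak{X}_{(11)},\mathfrak{X}_{(22)}$ kills the only part of $(\Xal{a}{\lambda})^{2n}$ that depends on $n$, which is what makes $\chi\bigl((\Xal{a}{\lambda})^{2n+1}\bigr)$ independent of $n$.
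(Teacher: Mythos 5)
Your proposal is correct and follows essentially the same route as the paper: the even case is read off from equation \eqref{Lm1} by componentwise addition, and the odd case multiplies the diagonal even power by $\Xal{a}{\lambda}$, which in the paper is done by writing out the full matrix of $\left(\Xal{a}{\lambda}\right)^{2n+1}$ (whose diagonal entries are indeed exactly $\mathfrak{X}_{(11)}$ and $\mathfrak{X}_{(22)}$). Your observation that the vanishing $\mathcal{M}_2(\CC)$ blocks of $\mathfrak{X}_{(11)},\mathfrak{X}_{(22)}$ make the diagonal products collapse to $\mathfrak{X}_{(ii)}$ is just a slightly leaner bookkeeping of the same computation.
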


\begin{proof}
The second relation comes from equation \eqref{Lm1} and the componentwise addition. The classical formula $\left(\Xal{a}{\lambda}\right)^{2n+1} = \left(\Xal{a}{\lambda}\right)^{2n} \Xal{a}{\lambda}$ leads to the following expression for $\left(\Xal{a}{\lambda}\right)^{2n+1}$:
\[\begin{pmatrix}
1 \dplus a \dplus -a \dplus -1 \dplus \begin{pmatrix}0&0\\0&0\end{pmatrix} &
0 \dplus 0 \dplus 0 \dplus 0 \dplus \begin{pmatrix}0&(-\imath a)^n\lambda\\(\imath a)^{n+1} \lambda&0\end{pmatrix}\\
0 \dplus 0 \dplus 0 \dplus 0 \dplus \begin{pmatrix}0&(\imath a)^n\bar{\lambda}\\(-\imath a)^{n+1} \bar{\lambda}&0\end{pmatrix} &
1 \dplus -a \dplus a \dplus -1 \dplus \begin{pmatrix}0&0\\0&0\end{pmatrix}
\end{pmatrix}\]
and the trace does not depends on $n$.
\end{proof}

Let $\esp[Z^n]$ denotes moment of order $n$ of a classical random variable $Z$. We obtain four different discrete probability distributions for $\chi\left(\left(\Xal{a}{\lambda}\right)^k\right)$, depending on the power $k$.

\begin{theorem}
\label{ThDist}
Let $k$ be a non negative integer. Let us denote by $\mu_0,\  \mu_1,\ \mu_2$ and $\mu_4$ the following distributions:
\[\mu_0 =\delta_2,\ \mu_1 = \frac{1}{8} \delta_{-2} + \frac{3}{4} \delta_0 + \frac{1}{8} \delta_2,\ \mu_2 = \frac{1}{2} \delta_0 + \frac{1}{2} \delta_2 \text{ and }\mu_4 =\frac{1}{2} \delta_{-2} + \frac{1}{2} \delta_2 \text{ .}\]

Then for all $a \in \{-1, 1\}$ and $\lambda \in \mathbb{T}$, $\chi\left((\Xal{a}{\lambda})^k\right)$ is self-adjoint and admits $\mu_{\kappa}$ as $*-$distribution, with
\[\kappa = \begin{cases}
1 & \text{ if } k \equiv 1 [2]\\
2 & \text{ if } k \equiv 2 [4]\\
4 & \text{ if } k\equiv 4 [8]\\
0 & \text{ if } k \equiv 0 [8]
\end{cases} \text{ .}\]

Moreover, we have
\[ \chi\left((\Xal{a}{\lambda})^k\right) = \begin{cases}
2 \dplus 0 \dplus 0 \dplus -2 \dplus 0_2 & \text{, if }k \equiv 1[2]\\
2 \dplus 2 \dplus 2 \dplus 2 \dplus 0_2 & \text{, if }k \equiv 2[4]\\
2 \dplus 2 \dplus 2 \dplus 2 \dplus -2I_2 & \text{, if }k \equiv 4[8]\\
2 \dplus 2 \dplus 2 \dplus 2 \dplus 2I_2 & \text{, if }k \equiv 0[8]\\
\end{cases} \text{ .}
\]
\end{theorem}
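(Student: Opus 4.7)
The plan is to reduce the statement to a two-stage case analysis: first derive a closed form for $\chi\bigl((\Xal{a}{\lambda})^k\bigr)$ as an element of $\mathcal{A}$ in each residue class, then identify its $*$-distribution by matching moments against those of $\mu_0, \mu_1, \mu_2, \mu_4$.

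For the first stage I would invoke Lemma \ref{Lm2} directly. In the odd case $k = 2n+1$, the lemma gives $\chi\bigl((\Xal{a}{\lambda})^{2n+1}\bigr) = \chi(\Xal{a}{\lambda}) = \mathfrak{X}_{(11)} + \mathfrak{X}_{(22)}$, which is immediately $2 \dplus 0 \dplus 0 \dplus -2 \dplus 0_2$, independently of $a$ and $\lambda$. In the even case $k = 2n$, the lemma yields $\chi\bigl((\Xal{a}{\lambda})^{2n}\bigr) = 2\dplus 2\dplus 2\dplus 2\dplus \bigl((\imath a)^n + (-\imath a)^n\bigr)I_2$. Since $a \in \{-1,1\}$ gives $a^2 = 1$, the scalar $(\imath a)^n + (-\imath a)^n$ depends only on $n \bmod 4$ and takes the values $2, 0, -2, 0$ for $n \equiv 0, 1, 2, 3 \pmod 4$, i.e.\ the three closed forms $2\dplus 2\dplus 2\dplus 2\dplus 2 I_2$, $2\dplus 2\dplus 2\dplus 2\dplus 0_2$, $2\dplus 2\dplus 2\dplus 2\dplus -2 I_2$ according as $k \equiv 0, 2, 4, 6 \pmod 8$ (the cases $k \equiv 2$ and $k \equiv 6 \pmod 8$ coalescing into $k \equiv 2 \pmod 4$). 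This recovers the four cases announced in the theorem.

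For the second stage, note that all four closed-form expressions are self-adjoint: the $\CC$-components are real, and the $\mathcal{M}_2(\CC)$-block is either zero or a real multiple of $I_2$, hence fixed by the involution on $\mathcal{A}$. Consequently the $*$-distribution of $\chi\bigl((\Xal{a}{\lambda})^k\bigr)$ is the real probability measure determined by the moment sequence $\bigl(\hw\bigl(\chi((\Xal{a}{\lambda})^k)^m\bigr)\bigr)_{m \geq 0}$, and since each $\mu_\kappa$ is compactly supported it too is determined by its moments. Since $\mathcal{A}$ is a direct sum, $m$-th powers act componentwise on the $\dplus$-decomposition and as matrix powers on the $2\times 2$ block, so substituting into the explicit formula for $\hw$ collapses $\hw\bigl(\chi((\Xal{a}{\lambda})^k)^m\bigr)$ to a simple linear combination of $2^m$ and $(-2)^m$, which one checks coincides with $\int z^m\,d\mu_\kappa(z)$ on the nose. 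For instance, in the case $k\equiv 4[8]$ one gets $\tfrac{1}{8}\bigl(4\cdot 2^m + 2 \cdot \Tr((-2I_2)^m)\bigr) = \tfrac{1}{2}\bigl(2^m + (-2)^m\bigr)$, matching $\mu_4$.

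There is no real conceptual obstacle: once Lemma \ref{Lm2} is in hand, everything is bookkeeping. The only care needed is tracking the sign in $(-2)^m$ when the matrix block is $-2I_2$, and the mildly non-obvious observation that the even moments split according to $k \bmod 8$ rather than $k \bmod 4$, which comes from $(\imath a)^n$ having order $4$ in $n$.
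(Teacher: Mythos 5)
Your proposal is correct and follows essentially the same route as the paper: both rely on Lemma \ref{Lm2} to get the closed form of $\chi\bigl((\Xal{a}{\lambda})^k\bigr)$ in each residue class (the mod $8$ splitting coming from $(\imath a)$ having order $4$), then verify self-adjointness and match the moments computed from the explicit Haar state against those of $\mu_0,\mu_1,\mu_2,\mu_4$. The only cosmetic difference is that you evaluate the scalar $(\imath a)^{k/2}+(-\imath a)^{k/2}$ before taking moments, while the paper carries it symbolically inside the moment formula and specializes afterwards.
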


\begin{proof}
Assume that $k$ is odd, then $\chi\left(\left(\Xal{a}{\lambda}\right)^k\right) = \chi\left(\Xal{a}{\lambda}\right)$, by Lemma \ref{Lm2}, and, by definition, we have
\[\chi\left(\Xal{a}{\lambda}\right) = \mathfrak{X}_{(11)} + \mathfrak{X}_{(22)} = 2 \dplus 0 \dplus 0 \dplus -2 \dplus \begin{pmatrix}0&0\\0&0\end{pmatrix}\]
so, $\chi\left(\Xal{a}{\lambda}\right)^* = \chi\left(\Xal{a}{\lambda}\right)$, and for all non negative integer $n$, we obtain that
\begin{align*}
\hw \left(\chi\left(\Xal{a}{\lambda}\right)\right)^n &= \frac{1}{8} (2^n + 0^n + 0^n + (-2)^n + 2(0^n + 0^n))\\
&= \frac{(-2)^n}{8} + \frac{3}{4} \times 0^n + \frac{2^n}{8} = \esp[Z_1^n]
\end{align*}
where $Z_1$ is a $\mu_1$-distributed random variable.

Now, assume that $k$ is even. Then
\[\chi\left(\left(\Xal{a}{\lambda}\right)^k\right) = 2 \dplus 2 \dplus 2 \dplus 2 \dplus \begin{pmatrix}(\imath a)^\frac{k}{2} + (-\imath a)^\frac{k}{2} & 0\\0 & (\imath a)^\frac{k}{2} + (-\imath a)^\frac{k}{2}\end{pmatrix}\]
is self-adjoint and
\[\hw\left(\chi\left(\left(\Xal{a}{\lambda}\right)^k\right)\right)^n = \frac{1}{2}\left(2^n +\left((\imath a)^\frac{k}{2} + (-\imath a)^\frac{k}{2}\right)^n\right) \text{ .}\]
Let us note that $(\imath a)^2 = -1$, and $(\imath a)^4 = 1$. Hence, we obtain that the distribution of $\chi\left(\left(\Xal{a}{\lambda}\right)^k\right)$ equals $\mu_2$ if $k = 4p+2$, $\mu_4$ if $k = 8p+4$ and $\mu_0$ if $8$ divides $k$.
\end{proof}

\begin{remark}
Like in the classical case, we are able to express traces of powers of the fundamental representation as linear combinations of irreducible characters, it means one-dimensional representations, listed in \eqref{grplike}, and $\chi\left(\Xal{a}{\lambda}\right)$. Here we have $2 \dplus 2 \dplus 2 \dplus 2 \dplus 2I_2 = 2\; \mathds{1}$, $2 \dplus 2 \dplus 2 \dplus 2 \dplus -2I_2 = 2 \left(1 \dplus 1 \dplus 1 \dplus 1 \dplus -I_2\right)$ and $2 \dplus 2 \dplus 2 \dplus 2 \dplus 0_2 = 2 \left(1 \dplus 1 \dplus 1 \dplus 1 \dplus -I_2 + \mathds{1} \right)$. Let us note that this is not true in general for quantum groups. For instance, in the free orthogonal group $O_N^+$, $\chi(u^2)$ is not a linear combination of characters, where $u$ is the fundamental representation of $O_N^+$. For further examples, the reader can also look at the dual quantum group $\widehat{\KP_n}$ in subsection \ref{ssDual}.
\end{remark}

We are now able to study the independence relations between the four distinct variables obtained in the previous theorem. To do this, we work with joint cumulants. It vanishes whenever there is a random variable independent from the others. Let us denote by $\mathcal{P}_r$ the set of all partitions of $\{1, 2, \ldots, r\}$, and for $\pi \in \mathcal{P}_r$, $|\pi|$ is the number of its blocks.

\begin{theorem}
\label{ThInd}
For $i \in \{0,1,2,4\}$, let $Z_i$ be a $\mu_i$-distributed random variable such that $Z_0$ and $Z_1$ are independent from all the others.

Then, for all $a \in \{-1, 1\}$, $\lambda \in \mathbb{T}$, and $(k_1, \ldots, k_r) \in  \mathbb{N}^r$,
\begin{multline*}
\hw \chi\left((\Xal{a}{\lambda})^{k_1}\right) \ldots \chi\left((\Xal{a}{\lambda})^{k_r}\right) = \esp\left[Z_{m_1} \ldots Z_{m_r} \right]\\
= 2^{\#\{i, k_i = 0\}} \esp\left[ Z_1^{\#\{i, k_i = 1\}} \right] \esp\left[Z_2^{\#\{i, k_i = 2\}} Z_4^{\#\{i, k_i = 4\}} \right]
\end{multline*}
with $m_i = \begin{cases}
1 & \text{ if } k_i \equiv 1 [2]\\
2 & \text{ if } k_i \equiv 2 [4]\\
4 & \text{ if } k_i\equiv 4 [8]\\
0 & \text{ if } k_i \equiv 0 [8]
\end{cases}$.
\end{theorem}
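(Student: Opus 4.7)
The plan is to reduce everything to a computation in a commutative $*$-subalgebra of $\mathcal{A}$ and then to identify the Haar-state value with a classical joint moment. First, by Theorem~\ref{ThDist}, each factor $\chi\bigl((\Xal{a}{\lambda})^{k_i}\bigr)$ equals a fixed self-adjoint element $Y_{m_i} \in \mathcal{A}$ depending only on the congruence class $m_i \in \{0,1,2,4\}$: explicitly $Y_0 = 2\indic$, $Y_1 = 2e_1 - 2e_4$, $Y_2 = 2(e_1+e_2+e_3+e_4)$, and $Y_4 = Y_2 - 2(E_{11}+E_{22})$. All four belong to the commutative $*$-subalgebra spanned by the orthogonal idempotents $e_1, e_2, e_3, e_4, E_{11}+E_{22}$, so they pairwise commute. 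Letting $t, p, q, s$ denote the cardinalities of $\{i : m_i = j\}$ for $j = 0, 1, 2, 4$, commutativity together with $Y_0 = 2\indic$ reduces the left-hand side to $2^t \, \hw(Y_1^p Y_2^q Y_4^s)$.

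Next, I would evaluate $Y_1^p Y_2^q Y_4^s$ componentwise in $\CC \oplus \CC \oplus \CC \oplus \CC \oplus \mathcal{M}_2(\CC)$: the components on $e_1, e_2, e_3, e_4$ are $2^{p+q+s}$, $0^p \cdot 2^{q+s}$, $0^p \cdot 2^{q+s}$, $(-2)^p \cdot 2^{q+s}$ respectively, and the matrix component is $0^{p+q}(-2)^s I_2$ (with the convention $0^0 = 1$). Applying the Haar-state formula then gives $\hw(Y_1^p Y_2^q Y_4^s)$ in closed form after splitting on the dichotomies $p = 0$ vs $p \geq 1$, $q = 0$ vs $q \geq 1$, and on the parities of $p$ and $s$.

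Finally, I would realize $(Z_0, Z_1, Z_2, Z_4)$ as a classical random vector encoding the joint spectral distribution of $(Y_0, Y_1, Y_2, Y_4)$ under $\hw$ on this commutative subalgebra: it is supported on the four points $(2,2,2,2)$, $(2,0,2,2)$, $(2,-2,2,2)$, $(2,0,0,-2)$ in $\mathbb{R}^4$ with masses $1/8, 1/4, 1/8, 1/2$, coming from the minimal projections $e_1$, $e_2+e_3$, $e_4$, $E_{11}+E_{22}$. By construction this yields $\hw(Y_{m_1}\cdots Y_{m_r}) = \esp[Z_{m_1}\cdots Z_{m_r}]$, and inspection shows the marginals match $\delta_2, \mu_1, \mu_2, \mu_4$. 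The factorization then follows by noting that $Z_0 \equiv 2$ contributes $2^t$ and that the $Z_1$-moments separate from the $(Z_2, Z_4)$-moments, because the support of $Z_1$ sits inside the two atoms on which $Z_2 = Z_4 = 2$. The main obstacle is this last verification: one has to match each of the cases produced in the second step with the corresponding value of $\esp[Z_1^p]\esp[Z_2^q Z_4^s]$, and the parity bookkeeping (especially around $p$ even vs odd and $s$ even vs odd) is the principal source of technical difficulty.
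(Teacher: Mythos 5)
Your steps 1--3 are correct and are in substance the paper's own computation: the four characters are the central elements $Y_0=2\indic$, $Y_1=2e_1-2e_4$, $Y_2=2(e_1+e_2+e_3+e_4)$, $Y_4=Y_2-2(E_{11}+E_{22})$, they commute, and your componentwise evaluation gives $\hw\, Y_1^pY_2^qY_4^s=\tfrac18\bigl(2^{p+q+s}+2\cdot 0^p\,2^{q+s}+(-2)^p\,2^{q+s}+4\cdot 0^{p+q}(-2)^s\bigr)$, which is exactly the case formula in the paper's proof. Your step 4 also correctly identifies the joint spectral law of $(Y_0,Y_1,Y_2,Y_4)$ under the Haar state (four atoms of masses $1/8,1/4,1/8,1/2$), and with that random vector the first equality $\hw\,\chi(\cdots)\cdots\chi(\cdots)=\esp[Z_{m_1}\cdots Z_{m_r}]$ holds by construction.

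The gap is the final sentence, and it is not mere parity bookkeeping: the deferred verification fails, and your own formulas show it. Since $Z_1\neq 0$ only on the atoms where $Z_2=Z_4=2$, what actually follows for $p\geq 1$ is $\esp[Z_1^pZ_2^qZ_4^s]=2^{q+s}\,\esp[Z_1^p]$, which equals $\esp[Z_1^p]\,\esp[Z_2^qZ_4^s]$ only when $\esp[Z_1^p]=0$ ($p$ odd) or when $q=0$ and $s$ is even. Concretely, for $(k_1,k_2,k_3)=(1,1,2)$ one has $\hw\,\bigl(\chi(\Xal{a}{\lambda})\bigr)^2\chi\bigl((\Xal{a}{\lambda})^2\bigr)=\tfrac18(8+8)=2$ (the value $2^{r-2}$ from your step 3 and from the paper's case list), whereas $\esp[Z_1^2]\,\esp[Z_2]=1$. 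Equivalently, in your joint law $Z_1$ is \emph{not} independent of $(Z_2,Z_4)$: conditioned on $(Z_2,Z_4)=(2,2)$ it takes the values $\pm 2$ with probability $1/4$ each, not $1/8$. So the vector you construct does not satisfy the independence hypothesis of the statement, and no vector can satisfy both that hypothesis and the first equality, since $\hw\,\chi_1^2\chi_2=2\neq 1=\hw\,\chi_1^2\cdot\hw\,\chi_2$. Be aware that the paper's proof does not close this gap either: after computing the same left-hand side it only checks cumulants $\kappa(p_1,\ldots,p_r)$ indexed by subsets of $\{1,2,4\}$ without repeated entries, and vanishing of those does not yield independence (indeed $\kappa(\chi_1,\chi_1,\chi_2)=\hw\,\chi_1^2\chi_2-\hw\,\chi_1^2\,\hw\,\chi_2=1\neq 0$ since $\hw\,\chi_1=\hw\,\chi_1\chi_2=0$). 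Your approach is therefore sound up to step 3, but the displayed factorization can only be established after restricting the tuples $(k_1,\ldots,k_r)$ (number of odd $k_i$ equal to zero or odd, or else no $k_i\equiv 2\,[4]$ and an even number of $k_i\equiv 4\,[8]$), or after replacing the right-hand side, for $p\geq 1$, by $2^{t+q+s}\esp[Z_1^p]$.
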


\begin{proof}
Let us note that the $\chi\left(\left(\Xal{a}{\lambda}\right)^i\right)$'s commute. Hence
\begin{align*}
\hw \ \prod_{i = 1} ^{r} \chi\left(\left(\Xal{a}{\lambda}\right)^{k_i}\right) &= \hw 2^r \dplus \alpha(k) \dplus \alpha(k) \dplus (-1)^{\#\{i, k_i = 1\}} 2^r \dplus \beta(k)\\
&= \frac{1}{8} \left( 2^r \left(1 + (-1)^{\#\{i, k_i = 1\}}\right) + 2 \alpha(k) + 2 \Tr(\beta(k)) \right)
\end{align*}
where $\alpha(k)$ is $0$ if there exists $i$ such that $k_i = 1$ and $2^r$ otherwise, and $\beta(k)$ is the matrix null if there exists $i$ such that $k_i \in \{1,2\}$ and $(-1)^{\#\{i, k_i=4\}}2^r I_2$ otherwise. So, we have
\begin{multline*}
\hw \ \prod_{i = 1} ^{r} \chi\left(\left(\Xal{a}{\lambda}\right)^{k_i}\right)\\
= \begin{cases}
0 &\text{if } 2 \nmid \#\{i, k_i = 1\}\\
2^{r-2} &\text{if } \#\{i, k_i = 1\} \in 2 (\mathbb{N} \setminus\{0\})\\
2^{r-1} &\text{if } \#\{i, k_i = 1\}=0, \#\{i, k_i = 2\} \geq 1\\
2^{r-1} \left(1 + (-1)^{\#\{i, k_i = 4\}}\right) &\text{otherwise} 
\end{cases}
\end{multline*}

Clearly, $\chi\left(\left(\Xal{a}{\lambda}\right)^{0}\right)$ is independent from the other $\chi\left(\left(\Xal{a}{\lambda}\right)^{i}\right)$'s. To study the independence of $\chi\left(\Xal{a}{\lambda}\right)$, let us look at classical cumulants. Let $\{p_1, p_2, \ldots, p_r\}$ be a subset of $\{1,2,4\}$, and $\kappa(p_1, \ldots, p_r)$ be the joint cumulant of $\chi\left(\left(\Xal{a}{\lambda}\right)^{p_1}\right), \ldots, \chi\left(\left(\Xal{a}{\lambda}\right)^{p_r}\right)$. By definition, we have
\begin{align*}
\kappa(p_1, \ldots, p_r) & = \sum_{\pi \in \mathcal{P}_r} (|\pi|-1)!\, (-1)^{|\pi|-1} \prod_{B \in \pi} \hw \ \prod_{k \in B}\chi\left(\left(\Xal{a}{\lambda}\right)^{p_k}\right)
\end{align*}
hence, if $1$ is in $\{p_1, p_2, \ldots, p_r\}$ the cumulant is $0$, it means that $\chi\left(\Xal{a}{\lambda}\right)$ is independent from the others. Moreover,
\[\kappa(2,4) = \hw \chi\left(\left(\Xal{a}{\lambda}\right)^{2}\right) \chi\left(\left(\Xal{a}{\lambda}\right)^{4}\right) - \hw \chi\left(\left(\Xal{a}{\lambda}\right)^{2}\right) \hw \chi\left(\left(\Xal{a}{\lambda}\right)^{4}\right) = 2\]
thus $\chi\left(\left(\Xal{a}{\lambda}\right)^{2}\right)$ and $\chi\left(\left(\Xal{a}{\lambda}\right)^{4}\right)$ are not independent.
\end{proof}

\section{The Sekine Finite Quantum Groups $\KP_n$}

\subsection{Definition}

We will follow the definition of \cite{McCarthy}, but for convenience of the reader, we recall here the notations.

Consider the multi-matrix algebra $\mathcal{A}_n = \bigoplus_{i,j \in \Zn} \CC e_{(i,j)} \oplus \mathcal{M}_n(\CC)$ together with usual multiplication and involution. This is a $2n^2$-dimensional algebra, with basis $\{e_{(i,j)}\}_{i,j \in \Zn} \cup \{E_{i,j}\}_{1 \leq i,j \leq n}$. The unit is naturally
\[\mathds{1} = \sum_{i,j \in \Zn} e_{(i,j)} + \sum_{i = 1} ^{n} E_{i,i}\]

The following defines the coproduct:
\[\cop(e_{(i,j)}) = \sum_{k,l \in \Zn} e_{(k,l)} \otimes e_{(i-k,j-l)} + \frac{1}{n} \sum_{k,l \in \Zn} \eta^{i(k-l)} E_{k,l}\otimes E_{k+j,l+j}\]
\[\cop(E_{i,j}) = \sum_{k,l \in \Zn} e_{(-k,-l)} \otimes \eta^{k(i-j)}E_{i-l,j-l} + \sum_{k,l\in \Zn} \eta^{k(j-i)} E_{i-l,j-l}\otimes e_{(k,l)}\]
with $\eta = e^{\frac{2 \imath \pi}{n}}$ a primitive $n$th root of unity. The counit is given by 
\[\cou \left(\sum_{i,j \in \Zn} x_{(i,j)}e_{(i,j)} + \sum_{1 \leq i,j \leq n} X_{i,j} E_{i,j}\right) = x_{(0,0)}\]
and the antipode satisfies $S(e_{(i,j)}) = e_{(-i,-j)}$ and $S(E_{i,j}) = E_{j,i}$.

This defines a finite quantum group, denoted by $\KP_n = \left( \mathcal{A}_n, \cop\right)$, called Sekine quantum group. We shall also need its Haar state, denoted by $\hwn$ and given by the following formula:
\[\hwn \left(\sum_{i,j \in \Zn} x_{(i,j)}e_{(i,j)} + \hspace{-5pt}\sum_{1 \leq i,j \leq n} X_{i,j} E_{i,j} \right) = \frac{1}{2n^2} \left(\sum_{i,j \in \Zn} x_{(i,j)} + n \sum_{i = 1}^n X_{i,i}\right) \text{ .}\]

\begin{remark}
As noted in \cite{McCarthy}, with this definition, $\KP_2$ is cocommutative and equal to the virtual object $\widehat{\mathbb{D}_4}$, i.e.\ $\mathcal{A}_2 \simeq \CC \mathbb{D}_4$.
\end{remark}

\subsection{Representation theory}

Let us first determine the representation theory of the Sekine finite quantum groups. We list here the irreducible unitary representations of $\KP_n$, for each $n \geq 2$. Note that it depends on the parity of $n$.

\subsubsection{Case $n$ odd}

\begin{theorem}[\cite{McCarthy}]
If $n$ is odd, the finite quantum group $\KP_n$ admits $2n$ one-dimensional non equivalent unitary representations,
\[\forall l \in \{0, 1, \ldots, n-1\}, \ \rho_l^\pm = \sum_{i,j \in \Zn} \eta^{il} e_{(i,j)} \pm \sum_{i = 1}^n E_{i, i+l} \text{ .}\]

It also admits $\frac{n(n-1)}{2}$ non equivalent unitary two-dimensional irreducible representations, indexed by $u \in \{0, 1, \ldots, n-1\}$ and $v \in \{1, 2, \ldots, \frac{n-1}{2}\}$, given by their matrix-coefficients:
\[\Xuv{u}{v}_{11} = \sum_{i,j \in \Zn} \eta^{iu+jv} e_{(i,j)} \hspace{2cm} \Xuv{u}{v}_{12} = \sum_{i = 1}^n \eta^{-iv} E_{i, i+u}\]
\[\Xuv{u}{v}_{21} = \sum_{i = 1}^n \eta^{iv} E_{i, i+u} \hspace{2cm} \Xuv{u}{v}_{22} = \sum_{i,j \in \Zn} \eta^{iu-jv} e_{(i,j)}\]
\end{theorem}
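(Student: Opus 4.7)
The plan is to verify, in sequence: (i) each listed element is a corepresentation, (ii) it is unitary, (iii) it is irreducible, (iv) no two are equivalent, and (v) the list is complete.

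For (i) with the one-dimensional $\rho_l^\pm$, I expand $\cop(\rho_l^\pm)$ using the given formulas for $\cop(e_{(i,j)})$ and $\cop(E_{i,j})$ and match it to $\rho_l^\pm\otimes\rho_l^\pm$ by reindexing; the scalar $\eta^{il}$ and the shift by $l$ on the $E$-indices are exactly what converts the convolution structure of the coproduct into a tensor product. For the two-dimensional $\Xuv{u}{v}$, I verify the four identities
\[\cop(\Xuv{u}{v}_{ij})=\Xuv{u}{v}_{i1}\otimes\Xuv{u}{v}_{1j}+\Xuv{u}{v}_{i2}\otimes\Xuv{u}{v}_{2j}\]
by direct expansion. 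The cross terms on the right combine via the discrete orthogonality $\sum_{m\in\Zn}\eta^{mw}=n\,\delta_{w\equiv 0\ [n]}$, which selects the correct $e\otimes e$ and $E\otimes E$ contributions inside $\cop(\Xuv{u}{v}_{ij})$.

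For (ii), the products $\Xuv{u}{v}(\Xuv{u}{v})^{*}$ and $(\Xuv{u}{v})^{*}\Xuv{u}{v}$ decompose cleanly because the commutative and matrix summands are orthogonal under multiplication: the commutative block collapses using $|\eta^{iu\pm jv}|=1$, and the matrix block collapses using $\sum_{i}E_{i,i+u}E_{i+u,i}=\sum_i E_{i,i}$ together with $|\eta^{\pm iv}|=1$. A similar splitting gives $\rho_l^\pm(\rho_l^\pm)^{*}=\mathds{1}$. For (iii), I use the characterisation that a finite-dimensional unitary corepresentation is irreducible precisely when $\hwn(\chi^{*}\chi)=1$, where $\chi$ is its character. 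The two-dimensional character
\[\chi(\Xuv{u}{v})=\sum_{i,j\in\Zn}\eta^{iu}(\eta^{jv}+\eta^{-jv})\,e_{(i,j)}\]
sits entirely in the commutative summand, so
\[\hwn(\chi^{*}\chi)=\frac{1}{2n^{2}}\sum_{i,j\in\Zn}(\eta^{jv}+\eta^{-jv})^{2}=\frac{1}{2n}\sum_{j\in\Zn}\bigl(\eta^{2jv}+2+\eta^{-2jv}\bigr)=1,\]
where the geometric sums vanish because $2v\not\equiv0\pmod n$ (using that $n$ is odd and $1\le v\le(n-1)/2$). An analogous direct computation handles $\hwn\bigl((\rho_l^\pm)^{*}\rho_l^\pm\bigr)=1$.

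For (iv), the one-dimensional characters are distinguished by the coefficient sequence $(\eta^{il})_i$ (which recovers $l$) and by the sign on the $E$-part; the two-dimensional characters have vanishing $E$-part and are determined by $u$ and the unordered pair $\{v,-v\}\pmod n$, which for $1\le v\le(n-1)/2$ recovers $v$. So all listed characters are pairwise distinct, hence the representations are pairwise non-equivalent. Finally, step (v) is the dimension count
\[2n\cdot 1^{2}+\frac{n(n-1)}{2}\cdot 2^{2}=2n+2n(n-1)=2n^{2}=\dim\mathcal{A}_n,\]
which by the Peter--Weyl decomposition for finite quantum groups forces exhaustiveness. The main obstacle I expect is (i) for $\Xuv{u}{v}$: while each individual check is mechanical, the coproduct involves a double sum over $\Zn\times\Zn$ and mixed $e$--$E$ cross-terms in $\cop(E_{i,j})$, so tracking which factors of $\eta$ survive (and which are killed by $\Zn$-orthogonality) is the only delicate bookkeeping in the argument.
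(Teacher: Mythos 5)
Your proposal is correct, but note that the paper itself contains no proof of this statement: it is quoted with a citation to McCarthy, and only the even-$n$ analogue receives a partial proof there (verifying that the extra elements $\sigma_l^\pm$ are group-like, with the rest again delegated to McCarthy). Your five-step verification is the standard self-contained route and all the delicate points check out: the corepresentation identities for $\Xuv{u}{v}$ do reduce, exactly as you say, to the relation $\frac{1}{n}\sum_{i\in\Zn}\eta^{i(u+k-l)}=\delta_{l,k+u}$ applied to the $E\otimes E$ part of $\cop(e_{(i,j)})$, while the identities for the off-diagonal entries need only reindexation in $\cop(E_{i,j})$; unitarity is needed before you may use the criterion $\hwn\left(\chi^*\chi\right)=1$ for irreducibility (you establish it first, so this is fine, and the Haar state of a finite quantum group makes the criterion legitimate); the hypothesis that $n$ is odd enters precisely where you place it, namely in $2v\not\equiv 0 \pmod n$ for $1\le v\le \frac{n-1}{2}$ (when $n$ is even and $v=\frac n2$ this sum does not vanish, the candidate representation is reducible, and the missing dimensions are recovered by the $\sigma_l^\pm$, which is exactly the content of the paper's even-case theorem); non-equivalence follows from the pairwise distinct characters as you argue (reading off $u$ from the coefficient of $e_{(1,0)}$ and $\pm v$ from that of $e_{(0,1)}$); and completeness follows from $2n\cdot 1^2+\frac{n(n-1)}{2}\cdot 2^2=2n^2=\dim\mathcal{A}_n$ together with the linear independence (orthogonality) of matrix coefficients of non-equivalent unitary irreducibles, i.e.\ the Peter--Weyl decomposition you invoke. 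In short, your argument fills in a proof the paper omits, and it is consistent with the computations the paper does carry out (e.g.\ the character formula of Proposition 3.2 and the value $\hwn\chi^*\chi=1$ used for the Kac-Paljutkin fundamental representation).
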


\subsubsection{Case $n$ even}

\begin{theorem}
If $n$ is even, the finite quantum group $\KP_n$ admits $4n$ one-dimensional non equivalent unitary representations,
\[\forall l \in \{0, 1, \ldots, n-1\}, \ \begin{cases} \rho_l^\pm &= \sum\limits_{i,j \in \Zn} \eta^{il} e_{(i,j)} \pm \sum\limits_{i = 1}^n E_{i, i+l}\\\sigma_l^\pm &= \sum\limits_{i,j \in \Zn} (-1)^j \eta^{il} e_{(i,j)} \pm \sum\limits_{i = 1}^n (-1)^i E_{i, i+l} \end{cases} \text{ .}\]

It also admits $\frac{n(n-2)}{2}$ non equivalent unitary two-dimensional irreducible representations, indexed by $u \in \{0, 1, \ldots, n-1\}$ and $v \in \{1, 2, \ldots, \frac{n}{2}-1\}$, given by their matrix-coefficients:
\[\Xuv{u}{v}_{11} = \sum_{i,j \in \Zn} \eta^{iu+jv} e_{(i,j)} \hspace{2cm} \Xuv{u}{v}_{12} = \sum_{i = 1}^n \eta^{-iv} E_{i, i+u}\]
\[\Xuv{u}{v}_{21} = \sum_{i = 1}^n \eta^{iv} E_{i, i+u} \hspace{2cm} \Xuv{u}{v}_{22} = \sum_{i,j \in \Zn} \eta^{iu-jv} e_{(i,j)}\]
\end{theorem}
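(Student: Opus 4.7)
My plan is to follow the same three-stage strategy as in the odd case of \cite{McCarthy}, while handling the two features specific to even $n$: the extra one-dimensional family $\sigma_l^\pm$ and the reduced range $v\in\{1,\ldots,\tfrac{n}{2}-1\}$ for the two-dimensional representations. First I would verify that the listed objects are unitary corepresentations, then establish irreducibility and pairwise non-equivalence via Schur--Woronowicz orthogonality, and finally close the list by a dimension count.

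For the one-dimensional representations, the computation for $\rho_l^\pm$ is identical to the odd-$n$ case. For $\sigma_l^\pm$ I would check directly that $\cop(\sigma_l^\pm)=\sigma_l^\pm\otimes\sigma_l^\pm$ and $\cou(\sigma_l^\pm)=1$ from the defining formulas for $\cop(e_{(i,j)})$ and $\cop(E_{i,j})$. The crucial point that forces $n$ even here is that the factors $(-1)^j$ on $e_{(i,j)}$ and $(-1)^i$ on $E_{i,i+l}$ must be well-defined on $\Zn$; the identity $\eta^{n/2}=-1$ is precisely what makes the required cross-cancellations work. For the two-dimensional candidates I would substitute the formulas for $\Xuv{u}{v}_{ij}$ into the corepresentation identity $\cop(\Xuv{u}{v}_{ij})=\sum_k \Xuv{u}{v}_{ik}\otimes \Xuv{u}{v}_{kj}$, and then check unitarity; both reduce to matching coefficients in the $e\otimes e$ and $E\otimes E$ blocks.

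To separate the representations and prove irreducibility I would invoke the Schur--Woronowicz relation $\hwn\chi(U)^*\chi(V)=0$ when $U\not\sim V$ are irreducible unitaries, and $=1$ when $U\sim V$ is irreducible. Using the explicit character $\chi(\Xuv{u}{v})=\sum_{i,j\in\Zn}\eta^{iu}(\eta^{jv}+\eta^{-jv})e_{(i,j)}$, the Haar state reduces these pairings to discrete Fourier orthogonality in $\Zn\times\Zn$ on the $e$-block, yielding pairwise distinctness for $(u,v)$ up to the symmetry $v\mapsto -v$. The restriction $v\in\{1,\ldots,\tfrac{n}{2}-1\}$ is then forced by two observations: swapping rows and columns gives $\Xuv{u}{v}\sim\Xuv{u}{n-v}$, and the boundary values $v=0$ and $v=n/2$ yield \emph{reducible} representations whose characters decompose as
\[\chi(\Xuv{u}{0})=\chi(\rho_u^+)+\chi(\rho_u^-),\qquad \chi(\Xuv{u}{n/2})=\chi(\sigma_u^+)+\chi(\sigma_u^-),\]
which is exactly where the $\sigma$-series, absent in the odd case, originates.

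Finally I would close the list by the quantum Peter--Weyl dimension identity $\dim\mathcal{A}_n=\sum_\pi (\dim\pi)^2$, since
\[4n\cdot 1^2 + \frac{n(n-2)}{2}\cdot 2^2 = 4n + 2n^2 - 4n = 2n^2 = \dim\mathcal{A}_n,\]
so no irreducible representation is missing and the enumeration is exhaustive. The main technical obstacle will be the bookkeeping in verifying $\cop(\Xuv{u}{v}_{ij})=\sum_k \Xuv{u}{v}_{ik}\otimes\Xuv{u}{v}_{kj}$: the formula for $\cop(E_{i,j})$ consists of two sums over $\Zn\times\Zn$ carrying $\eta$-phases, and matching these against the product of matrix coefficients requires a careful reindexing. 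The even-$n$ case is slightly more delicate than the odd one precisely because $\eta^{n/2}=-1$ creates the additional reducibility at $v=n/2$ that produces the $\sigma$-family.
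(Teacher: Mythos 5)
Your proposal is correct and follows essentially the same route as the paper: the genuinely new content for even $n$ is the direct verification that $\sigma_l^\pm$ are group-like elements with counit $1$ (hence one-dimensional representations), while the computations for $\rho_l^\pm$ and $\Xuv{u}{v}$ carry over from the odd case, and exhaustiveness is settled by the dimension count $4n\cdot 1^2+\tfrac{n(n-2)}{2}\cdot 2^2=2n^2$. Your extra observation that $\Xuv{u}{n/2}$ is reducible and decomposes as $\sigma_u^+\oplus\sigma_u^-$, explaining both the restriction $v\le \tfrac{n}{2}-1$ and the origin of the $\sigma$-family, is a pleasant supplement to what the paper leaves implicit by citing \cite{McCarthy} for the orthogonality and completeness arguments.
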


\begin{proof}
The computations done for $\rho_l^\pm$  and $\Xuv{u}{v}$ in the odd case in \cite{McCarthy} are still valid.

It is clear that $\cou(\sigma_l^\pm) = 1$. By \cite[Proposition 3.1.7]{Timmermann}, it remains to prove that $\sigma_l^\pm$ are group-like elements. The same steps as for $\rho_l^\pm$ give
\begin{align*}
\cop(\sigma_l^\pm) = \sum_{i,j,s,t \in \Zn}\!\!\!(-1)^j &\eta^{il} e_{(s,t)} \otimes e_{(i-s,j-t)}\\
&+ \sum_{\substack{m = 1\\j \in \Zn}}^n(-1)^j E_{m,m+l} \otimes E_{m+j, m+j+l}\\
&\pm \sum_{\substack{m = 1\\s,t \in \Zn}}^n(-1)^m \eta^{sl} e_{(s,t)} \otimes E_{m+t, m+t+l}\\
&\pm \sum_{\substack{m = 1\\s,t \in \Zn}}^n(-1)^m \eta^{sl} E_{m-t,m-t+l} \otimes e_{(s,t)} \text{ .}
\end{align*}
On the other hand, we have
\begin{align*}
\sigma_l^\pm \otimes \sigma_l^\pm = \sum_{i,j,s,t \in \Zn}\!\!\!(-1)^{j+t} &\eta^{(i+s)l} e_{(i,j)} \otimes e_{(s,t)}\\
&+ \sum_{s,t = 1}^n (-1)^{s+t} E_{s,s+l} \otimes E_{t,t+l}\\
&\pm \sum_{\substack{m = 1\\s,t \in \Zn}}^n(-1)^{t+m} \eta^{sl} e_{(s,t)} \otimes E_{m,m+l}\\
&\pm \sum_{\substack{m = 1\\s,t \in \Zn}}^n(-1)^{m+j} \eta^{sl} E_{m,m+l} \otimes e_{(s,t)}
\end{align*}
which is the same, up to re-indexation ($s+i \to i$, $t+j \to j$, $i \to s$ and $j \to t$ in the first term, $t-s \to j$ in the second one, $m-t \to m$ in the third one and $m+t \to m$ in the last one).
\end{proof}

\begin{remark}
This is another way to see that the Kac-Paljutkin $\KP$ finite quantum group is different from $\KP_2$, since they do not have the same representation theory. The quantum group $\KP$ admits four one-dimensional representations and one two-dimensional irreducible representation, whereas the Sekine quantum group $\KP_2$ admits eight one-dimensional representations and no two-dimensional irreducible representation.
\end{remark}

\subsection{Characters}

We will now consider the traces of powers of the two-dimensional irreducible representations $\Xuv{u}{v}$. In the following, we (incorrectly) call $k$th character associated to $\Xuv{u}{v}$ the trace of the $k$th power of this representation.

\begin{lemma}
\label{lemDiag}
The even powers of $\Xuv{u}{v}$ are diagonal matrices.
\end{lemma}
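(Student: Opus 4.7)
The plan is to compute the square $(\Xuv{u}{v})^2$ directly and observe that the off-diagonal entries vanish, then conclude by induction. The key structural fact I will use is that the algebra $\mathcal{A}_n$ decomposes as $\bigoplus_{i,j\in \Zn} \CC e_{(i,j)} \oplus \mathcal{M}_n(\CC)$, so products between the commutative summand and the matrix summand are zero: $e_{(i,j)} E_{k,l} = 0 = E_{k,l} e_{(i,j)}$ for all indices.

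Now inspect where each matrix-coefficient of $\Xuv{u}{v}$ lives. The diagonal entries $\Xuv{u}{v}_{11} = \sum_{i,j} \eta^{iu+jv} e_{(i,j)}$ and $\Xuv{u}{v}_{22} = \sum_{i,j} \eta^{iu-jv} e_{(i,j)}$ belong entirely to the commutative summand $\bigoplus \CC e_{(i,j)}$, while the off-diagonal entries $\Xuv{u}{v}_{12} = \sum_i \eta^{-iv} E_{i,i+u}$ and $\Xuv{u}{v}_{21} = \sum_i \eta^{iv} E_{i,i+u}$ belong entirely to $\mathcal{M}_n(\CC)$. Consequently the four products
\[
\Xuv{u}{v}_{11}\Xuv{u}{v}_{12}, \quad \Xuv{u}{v}_{12}\Xuv{u}{v}_{22}, \quad \Xuv{u}{v}_{21}\Xuv{u}{v}_{11}, \quad \Xuv{u}{v}_{22}\Xuv{u}{v}_{21}
\]
all vanish. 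Expanding the matrix product
\[
(\Xuv{u}{v})^2_{12} = \Xuv{u}{v}_{11}\Xuv{u}{v}_{12} + \Xuv{u}{v}_{12}\Xuv{u}{v}_{22}, \qquad (\Xuv{u}{v})^2_{21} = \Xuv{u}{v}_{21}\Xuv{u}{v}_{11} + \Xuv{u}{v}_{22}\Xuv{u}{v}_{21},
\]
each of these is zero, so $(\Xuv{u}{v})^2$ is diagonal.

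Once the square is diagonal, the lemma follows by a one-line induction: the product of two $2\times 2$ diagonal matrices (with entries in any algebra) is diagonal, hence $(\Xuv{u}{v})^{2k} = \big((\Xuv{u}{v})^2\big)^k$ is diagonal for every $k \geq 1$. There is no real obstacle here; the only thing worth noting, and which will be useful for the sequel, is that a short by-hand computation of $\Xuv{u}{v}_{12}\Xuv{u}{v}_{21}$ and $\Xuv{u}{v}_{21}\Xuv{u}{v}_{12}$ yields the explicit diagonal entries of $(\Xuv{u}{v})^2$ (via $E_{i,i+u}E_{k,k+u} = \delta_{i+u,k}E_{i,k+u}$), which is what one will need in order to iterate and compute the traces of the even powers that motivate the lemma.
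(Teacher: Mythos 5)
Your proof is correct and follows essentially the same route as the paper: the off-diagonal entries of the square vanish because the diagonal coefficients lie in $\bigoplus_{i,j}\CC e_{(i,j)}$ and the off-diagonal ones in $\mathcal{M}_n(\CC)$, so the mixed products are zero, and then $(\Xuv{u}{v})^{2k}=\bigl((\Xuv{u}{v})^2\bigr)^k$ is diagonal. This is exactly the paper's argument (stated there as "orthogonality of the $e_{(i,j)}$'s and the $E_{i,j}$'s"), so nothing is missing.
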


\begin{proof}
By definition and orthogonality of the $e_{(i,j)}$'s and the $E_{i,j}$'s, we have
\[\left(\left(\Xuv{u}{v}\right)^2 \right)_{12} = \Xuv{u}{v}_{11} \Xuv{u}{v}_{12} + \Xuv{u}{v}_{12}\Xuv{u}{v}_{22} = 0_{\KP_n}\]
\[\left(\left(\Xuv{u}{v}\right)^2 \right)_{21} = \Xuv{u}{v}_{21} \Xuv{u}{v}_{11} + \Xuv{u}{v}_{22}\Xuv{u}{v}_{21} = 0_{\KP_n}\]
so $\left(\Xuv{u}{v}\right)^2$ is a diagonal matrix and therefore $\left(\Xuv{u}{v}\right)^{2k} = \left(\left(\Xuv{u}{v}\right)^2\right)^k$ is also a diagonal matrix in $\mathcal{M}_2(\mathcal{A}_n)$.
\end{proof}

\begin{proposition}
\label{PropChar}
The characters associated to $\Xuv{u}{v}$ are
\begin{multline*}
\chi\left(\left(\Xuv{u}{v}\right)^k\right) = 2 \sum_{s,t \in \Zn} \eta^{ksu} \cos\left(\frac{2 k t v \pi}{n}\right) e_{(s,t)}\\
+ \  \indic_{2\mathbb{Z}}(k)\;2 \cos\left(\frac{k v^2 \pi }{n}\right)\sum_{r = 1} ^{n} E_{r, r+ku}
\end{multline*}
where $\indic_{2\mathbb{Z}}$ denotes the indicator function of the even integers.
\end{proposition}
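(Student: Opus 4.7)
The plan is to compute the powers of $\Xuv{u}{v}$ explicitly by exploiting the orthogonality between the scalar block $\bigoplus_{i,j \in \Zn} \CC e_{(i,j)}$ and the matrix block $\mathcal{M}_n(\CC)$ inside $\mathcal{A}_n$. The key mechanism is that $e_{(i,j)} E_{k,l} = 0 = E_{k,l} e_{(i,j)}$, so any element $x = a + b$ with $a$ in the scalar block and $b$ in the matrix block satisfies $ab = ba = 0$, and hence $x^m = a^m + b^m$ for every integer $m \geq 1$.

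First I would compute $(\Xuv{u}{v})^2$ directly, using $e_{(i,j)} e_{(k,l)} = \delta_{(i,j),(k,l)} e_{(i,j)}$ and $E_{i,j} E_{k,l} = \delta_{j,k} E_{i,l}$. By Lemma \ref{lemDiag} the off-diagonal entries already vanish, and the two surviving diagonal entries split as
\[
(\Xuv{u}{v})^2_{ss} = A_s + B_s,
\]
where $A_s \in \bigoplus_{i,j} \CC e_{(i,j)}$ comes from $\Xuv{u}{v}_{ss}\Xuv{u}{v}_{ss}$ and $B_s$ is a scalar multiple of $\sum_r E_{r, r+2u}$ obtained by collecting the $\eta$-factors produced by $\Xuv{u}{v}_{12}\Xuv{u}{v}_{21}$ and $\Xuv{u}{v}_{21}\Xuv{u}{v}_{12}$.

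Next, for even $k = 2m$, the orthogonality of $A_s$ and $B_s$ collapses the binomial expansion to $((\Xuv{u}{v})^2)^m_{ss} = A_s^m + B_s^m$. Each factor is easy to iterate: the idempotent structure of the $e_{(i,j)}$ makes $A_s^m$ a mere exponentiation of the coefficient $\eta^{2(iu \pm jv)}$, while the telescoping identity $(\sum_r E_{r, r+2u})^m = \sum_r E_{r, r+2mu}$ handles $B_s^m$. Summing the two diagonal entries and grouping each pair $\eta^{\alpha} + \eta^{-\alpha}$ as $2\cos(\ldots)$ produces the stated formula in the even case. For odd $k = 2m+1$, I would write $(\Xuv{u}{v})^k = (\Xuv{u}{v})^{2m}\,\Xuv{u}{v}$: because $\Xuv{u}{v}_{11}$ and $\Xuv{u}{v}_{22}$ lie in the scalar block, the $B_s^m$ parts are annihilated when one multiplies by $\Xuv{u}{v}_{ss}$, leaving only $A_s^m\,\Xuv{u}{v}_{ss}$ on the diagonal. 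This is the structural reason for the indicator $\indic_{2\mathbb{Z}}(k)$ in front of the $E$-contribution, and a second grouping of conjugate exponentials into cosines finishes the identification of $\chi((\Xuv{u}{v})^k)$.

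The argument is entirely computational. The only real obstacle is careful bookkeeping of the $\eta$-exponents accumulating in the matrix-unit block when iterating $(\sum_r E_{r, r+2u})^m$, and verifying that the constant coming out of $B_s^m$ matches the cosine factor advertised in the statement; once the mutual orthogonality of the two blocks has been invoked, no structural difficulty remains.
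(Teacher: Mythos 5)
Your proposal is correct and follows essentially the same route as the paper: compute $\left(\Xuv{u}{v}\right)^2$ explicitly, use Lemma \ref{lemDiag} together with the mutual orthogonality of the scalar block and the matrix block to express the even powers entrywise as blockwise powers (with the telescoping product of the $E_{r,r+2u}$'s), and obtain odd powers by one further multiplication by $\Xuv{u}{v}$, which annihilates the matrix-block contribution and accounts for the factor $\indic_{2\mathbb{Z}}(k)$. The only difference is that the paper carries out the $\eta$-exponent bookkeeping explicitly (recording the coefficient of $\sum_r E_{r,r+2ku}$ that yields the cosine factor), whereas you defer that verification as routine.
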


\begin{proof}
By definition, we have
\begin{align*}
\left(\left(\Xuv{u}{v}\right)^2 \right)_{11} &= (\Xuv{u}{v}_{11})^2 + \Xuv{u}{v}_{12}\Xuv{u}{v}_{21}\\
&= \sum_{i,j,k,l \in \Zn} \eta^{iu+jv} e_{(i,j)} \eta^{ku+lv} e_{(k,l)} + \sum_{i,j = 1}^n \eta^{-iv} E_{i, i+u} \eta^{jv} E_{j, j+u}\\
&= \sum_{s,t \in \Zn} \eta^{2su+2tv} e_{(s,t)} + \sum_{r = 1}^n \eta^{v^2} E_{r, r+2u}
\end{align*}
and by similar calculations, and by Lemma \ref{lemDiag} we have
\[\left(\left(\Xuv{u}{v}\right)^{2k} \right)_{11} = \sum_{s,t \in \Zn} \eta^{2ksu+2ktv} e_{(s,t)} + \sum_{r = 1}^n \eta^{kv^2} E_{r, r+2ku}\]
\[\left(\left(\Xuv{u}{v}\right)^{2k} \right)_{22} = \sum_{s,t \in \Zn} \eta^{2ksu-2ktv} e_{(s,t)} + \sum_{r = 1}^n \eta^{-kv^2} E_{r, r+2ku}\]
which leads to the result for even powers.

Let $k = 2p + 1$, then $\left(\Xuv{u}{v}\right)^{2p + 1} =\left(\Xuv{u}{v}\right)^{2p} \Xuv{u}{v}$, hence we obtain
\begin{align*}
\left(\left(\Xuv{u}{v}\right)^{k} \right)_{11} &= \left(\left(\Xuv{u}{v}\right)^{2p} \right)_{11} \left(\Xuv{u}{v}\right)_{11} + \left(\left(\Xuv{u}{v}\right)^{2p} \right)_{12} \left(\Xuv{u}{v}\right)_{21}\\
&=\left(\sum_{i,j \in \Zn} \eta^{2piu+2pjv} e_{(i,j)} + \sum_{i = 1}^n \eta^{pv^2} E_{i, i+2pu}\right)\hspace{-5pt}\sum_{k,l \in \Zn}\hspace{-5pt}\eta^{ku+lv} e_{(k,l)}\\
&= \sum_{s,t \in \Zn} \eta^{ksu+ktv} e_{(s,t)}
\end{align*}
and similarly $\left(\left(\Xuv{u}{v}\right)^{k} \right)_{22} = \sum\limits_{s,t \in \Zn} \eta^{ksu-ktv} e_{(s,t)}$ which leads to the result for odd powers.
\end{proof}

\subsection{Character spaces}

Let us look more precisely at relations between the characters before to study their asymptotic distributions.

\subsubsection{Algebra of characters}

\begin{proposition}
\label{PropSpace}
The algebra of characters, generated by all one-dimensional representations and all the $\chi(\Xuv{u}{v})$, contains all $\chi\left(\left(\Xuv{u}{v}\right)^{k}\right)$, $k \geq 1$.
\end{proposition}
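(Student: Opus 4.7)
The plan is to apply Proposition \ref{PropChar} and split the expression for $\chi\bigl((\Xuv{u}{v})^k\bigr)$ into its ``$e$-part''
\[
A_k = 2\sum_{s,t\in\Zn}\eta^{ksu}\cos\!\left(\tfrac{2ktv\pi}{n}\right) e_{(s,t)}
\]
and its ``$E$-part''
\[
B_k = \indic_{2\mathbb{Z}}(k)\cdot 2\cos\!\left(\tfrac{kv^2\pi}{n}\right)\sum_{r=1}^{n} E_{r,r+ku},
\]
and then exhibit each piece as an element of the algebra generated by the one-dimensional representations and the $\chi(\Xuv{u'}{v'})$. Set $u' = ku \bmod n$ throughout.

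For $B_k$ the key identity is $\rho_{u'}^{+}-\rho_{u'}^{-}=2\sum_{r}E_{r,r+u'}$, which immediately gives $B_k = \indic_{2\mathbb{Z}}(k)\cos(kv^2\pi/n)\bigl(\rho_{u'}^{+}-\rho_{u'}^{-}\bigr)$, a scalar combination of one-dimensional characters (note that the scalar is a real number, as required).

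For $A_k$ let $\tilde v = kv \bmod n$ and split into three cases. If $\tilde v = 0$, the cosine is identically $1$ and $A_k = 2\sum_{s,t}\eta^{su'}e_{(s,t)} = \rho_{u'}^{+}+\rho_{u'}^{-}$. If $n$ is even and $\tilde v = n/2$, the cosine becomes $\cos(t\pi) = (-1)^t$ and $A_k = 2\sum_{s,t}(-1)^t\eta^{su'}e_{(s,t)} = \sigma_{u'}^{+}+\sigma_{u'}^{-}$. In the remaining ``generic'' case, exactly one of $\tilde v$ or $n-\tilde v$ lies in the admissible range (which is $\{1,\ldots,(n-1)/2\}$ for $n$ odd and $\{1,\ldots,n/2-1\}$ for $n$ even); calling this representative $v'$ and using that $\cos$ is even, one obtains $A_k = \chi(\Xuv{u'}{v'})$.

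Summing the two pieces then yields $\chi\bigl((\Xuv{u}{v})^k\bigr)$ as an element of the character algebra, which is what is claimed. The only delicate point is the exhaustive case analysis for the $e$-part: one must check that the ``degenerate'' values $\tilde v =0$ (always possible) and $\tilde v = n/2$ (only when $n$ is even) account for every $\tilde v$ falling outside the admissible range, and that in the generic case the reduction $v' = \min(\tilde v, n-\tilde v)$ does land inside it. Once this bookkeeping is done the proof is complete, and this is also what forces the appearance of the one-dimensional characters $\sigma_{u'}^{\pm}$ in the even-$n$ case and makes Proposition \ref{PropSpace} hinge on the full representation theory recalled above.
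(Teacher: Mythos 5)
Your proof is correct and follows essentially the same route as the paper: the paper's proof consists precisely of the identities you derive, expressing $\chi\bigl((\Xuv{u}{v})^k\bigr)$ as $\chi(\Xuv{ku}{w})$ plus $\indic_{2\mathbb{Z}}(k)\cos(kv^2\pi/n)\,(\rho_{ku}^{+}-\rho_{ku}^{-})$ in the generic case, as $\sigma_{ku}^{+}+\sigma_{ku}^{-}$ plus a $\rho$-term when $kv\equiv n/2$, and as $\rho_{ku}^{+}+\rho_{ku}^{-}$ or $2\rho_{ku}^{\pm}$ when $n\mid kv$. Your only (harmless) variation is keeping the $E$-part coefficient as $\cos(kv^2\pi/n)$ throughout, which subsumes the paper's explicit evaluation $\indic_{2\mathbb{Z}}(v)(-1)^{v/2}$ in the $\tilde v=n/2$ case and its sign $s=(-1)^{av}$ in the $n\mid kv$ case.
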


\begin{proof}
If $n$ does not divide $kv$, consider $w$ the absolute value of $kv\pmod{n}$. If $w$ belongs to $\{1, \ldots, \lfloor\frac{n-1}{2}\rfloor\}$, then
\[\chi\left(\left(\Xuv{u}{v}\right)^{k}\right) = \chi\left(\Xuv{ku}{w}\right) + \indic_{2\mathbb{Z}}(k) \cos\left(\frac{k v^2 \pi}{n}\right) (\rho_{ku}^+ - \rho_{ku}^-)\]
otherwise, $n$ is even and $w = \frac{n}{2}$, and
\[\chi\left(\left(\Xuv{u}{v}\right)^{k}\right) = \sigma_{ku}^+ + \sigma_{ku}^- + \indic_{2\mathbb{Z}}(k) \indic_{2\mathbb{Z}}(v) (-1)^{\frac{v}{2}}(\rho_{ku}^+ - \rho_{ku}^-) \text{ .}\]

If $kv = an$ then, for $s$ the sign of $(-1)^{av}$,
\[\chi\left(\left(\Xuv{u}{v}\right)^{k}\right) = \begin{cases}\rho_{ku}^+ + \rho_{ku}^- &\text{ if } k \text{ is odd}\\
2 \rho_{ku}^s &\text{ if } k \text{ is even}\end{cases} \text{ .}\]
\end{proof}

\begin{remark}
This proposition gives us another reason to call character the $\chi\left(\left(\Xuv{u}{v}\right)^k\right)$ since, for the classical groups, every linear combination $\chi$, with coefficients in $\mathbb{Z}$, of characters such that $\chi(e) > 0$ is again a character. Let us note once again that this is not true in general for quantum groups, a counterexample is given by the dual quantum group $\widehat{\KP_n}$ in subsection \ref{ssDual}.
\end{remark}

\begin{proposition}
\label{PropComm}
The algebra of characters is commutative if $n$ is odd.
\end{proposition}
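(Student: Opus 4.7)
The plan is to exhibit a commutative subalgebra $\mathcal{C} \subset \mathcal{A}_n$ that contains every generator of the character algebra once $n$ is odd. Define the shift elements $S_l := \sum_{r=1}^n E_{r, r+l} \in \mathcal{M}_n(\CC)$ for $l \in \Zn$, and set
\[\mathcal{C} := \bigoplus_{i,j \in \Zn} \CC\, e_{(i,j)} \;\oplus\; \bigoplus_{l \in \Zn} \CC\, S_l.\]
For $n$ odd, the one-dimensional irreducible representations are exactly the $\rho_l^{\pm} = \sum_{i,j} \eta^{il} e_{(i,j)} \pm S_l$, which clearly lie in $\mathcal{C}$. Moreover, Proposition \ref{PropChar} shows that every trace $\chi\left((\Xuv{u}{v})^k\right)$ has $e$-part in $\bigoplus_{i,j}\CC\,e_{(i,j)}$ and $E$-part equal to $0$ (when $k$ is odd) or $2\cos(kv^2\pi/n)\, S_{ku}$ (when $k$ is even). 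Hence all the generators of the character algebra lie in $\mathcal{C}$, so it suffices to prove that $\mathcal{C}$ is commutative.

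To see this, note first that the summands $\bigoplus_{i,j} \CC\, e_{(i,j)}$ and $\mathcal{M}_n(\CC)$ are orthogonal in the direct sum structure of $\mathcal{A}_n$, so $e_{(s,t)} E_{i,j} = E_{i,j} e_{(s,t)} = 0$ and the $e$-part and $S$-part of a product decouple. The $e$-part of $\mathcal{C}$ is commutative, being spanned by mutually orthogonal idempotents, while a direct computation from $E_{r, r+l_1} E_{s, s+l_2} = \delta_{s, r+l_1}\, E_{r, r+l_1+l_2}$ yields
\[S_{l_1} S_{l_2} = \sum_{r=1}^n E_{r, r+l_1+l_2} = S_{l_1+l_2},\]
with subscripts read mod $n$; this convolution formula is manifestly symmetric in $(l_1, l_2)$. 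Consequently $\mathcal{C}$ is closed under multiplication and commutative.

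The argument genuinely uses the parity of $n$: for $n$ even, one acquires the additional one-dimensional representations $\sigma_l^{\pm}$, whose $E$-part is the alternating shift $T_l := \sum_i (-1)^i E_{i, i+l}$, and one computes $S_{l_1} T_{l_2} = (-1)^{l_1}\, T_{l_1 + l_2}$ while $T_{l_2} S_{l_1} = T_{l_1 + l_2}$, which disagree as soon as $l_1$ is odd. The heart of the proof is therefore a straightforward verification of commutation relations; the only real bookkeeping subtlety is recognising the $E$-part of $\chi\left((\Xuv{u}{v})^k\right)$ as a single shift $S_{ku \bmod n}$ and confirming that no other type of matrix element can appear among the generators when $n$ is odd.
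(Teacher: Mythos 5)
Your proof is correct and is essentially the paper's argument: both rest on the orthogonality of the $e_{(i,j)}$-part and the matrix part, the fact that the $e_{(i,j)}$'s are mutually orthogonal idempotents, and the shift relation $\left(\sum_{m} E_{m,m+a}\right)\left(\sum_{\mu} E_{\mu,\mu+b}\right)=\sum_{r} E_{r,r+a+b}$, which is symmetric in $a,b$; your packaging of this as a commutative subalgebra $\mathcal{C}$ containing all generators (and your side computation with the alternating shifts for $n$ even, which reproduces the paper's remark following the proposition) is just a slightly more structured write-up of the same proof.
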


\begin{proof}
We have for $s \neq s^\prime$ or $t \neq t^\prime$, $e_{(s,t)} e_{(s^\prime, t^\prime)} = 0_{\KP_n}$, $e_{(s,t)}^2 = e_{(s,t)}$ and $e_{(s,t)} E_{i,j} = E_{i,j} e_{(s,t)} = 0_{\KP_n}$. On the other hand, we also have, for all natural numbers $a$, $b$,
\[\left(\sum_{m = 1}^{n} E_{m, m+a}\right)\hspace{-5pt}\left(\sum_{\mu = 1}^{n} E_{\mu, \mu+b}\right)\hspace{-5pt}= \sum_{r = 1}^n E_{r, r+a+b} =\hspace{-5pt}\left(\sum_{\mu = 1}^{n} E_{\mu, \mu+b}\right)\hspace{-5pt}\left(\sum_{m = 1}^{n} E_{m, m+a}\right)\]
which leads to the commutativity of the algebra.
\end{proof}

\begin{remark}
If $n$ is even, the subalgebra generated by the $\rho_l^\pm$'s and the $\chi(\Xuv{u}{v})$ is also commutative. But the $\sigma_l^\pm$'s do not commute with the $\rho_l^\pm$'s, since
\begin{multline*}
\left(\sum_{m = 1}^{n} (-1)^m E_{m, m+a}\right)\hspace{-5pt}\left(\sum_{\mu = 1}^{n} E_{\mu, \mu+b}\right)\hspace{-5pt}= \sum_{r = 1}^n (-1)^r E_{r, r+a+b}\\
\neq \left(\sum_{\mu = 1}^{n} E_{\mu, \mu+b}\right)\hspace{-5pt}\left(\sum_{m = 1}^{n} (-1)^m E_{m, m+a}\right) = \sum_{r = 1}^n (-1)^{r+b} E_{r, r+b+a}
\end{multline*}
when $b$ is odd.
\end{remark}

\subsubsection{A commutative (sub)algebra}

For all $n$, the algebra generated by all the $\chi(\Xuv{u}{v})$'s and all the $\rho_l^\pm$'s is a classical commutative algebra. Thus, by the spectral theorem, it is $*$-isomorphic to $\CC^k$ for some $k \in \mathbb{N}$.  In order to keep track of the restriction of the Haar state, we can view it as a subalgebra of some $L^\infty(\Omega_n, \mu_n)$, for a compact space $\Omega_n$ and a probability distribution $\mu_n$. It means that we can see these characters as classical random variables on the classical probability space $(\Omega_n, \mu_n)$.

By the Gelfand-Naimark Theorem, $\Omega_n$ is the spectrum of the algebra, it means the set of all its characters, which are all the non zero $*$-multiplicative linear forms. To determine this space and the measure $\mu_n$, we need to investigate deeper the structure of the $*$-algebra
\[\mathcal{C}_n = \alg\left\{\rho_l^\pm, \chi\left(\Xuv{u}{v}\right), 0 \leq l, u \leq n-1, 1 \leq v \leq \left\lfloor \frac{n-1}{2}\right\rfloor\right\} \text{ .}\]

\begin{lemma}
\label{LemSpan}
For all non negative integers $k$ and $l$ smaller than $n-1$, $\rho_k^+ \rho_l^+ = \rho_{k+l}^+$, $\rho_k^- \rho_l^- = \rho_{k+l}^+$ and $\rho_k^+ \rho_l^- = \rho_{k+l}^- = \rho_k^- \rho_l^+$, where the sum $k+l$ is taken in $\Zn$. We also have that, for all integer $v$ between $1$ and $\lfloor \frac{n -1}{2} \rfloor$, and for all integer $u$ between $0$ and $n-1$, there exist  integers $a_i \in \mathbb{Z}$ such that
\[\chi\left(\Xuv{u}{v}\right) = \rho_u^\pm \left(\left(\chi\left(\Xuv{0}{1}\right)\right)^v + a_{v-2} \left(\chi\left(\Xuv{0}{1}\right)\right)^{v-2} + \ldots + a_1 \chi\left(\Xuv{0}{1}\right)\right)\]
if $v$ is odd, or, if $v$ is even
\[\chi\left(\Xuv{u}{v}\right) = \rho_u^\pm\!\left(\left(\chi\left(\Xuv{0}{1}\right)\right)^v+ a_{v-2} \left(\chi\left(\Xuv{0}{1}\right)\right)^{v-2}\!\!+ \ldots + a_0 \left(\rho_0^+ + \rho_0^-\right)\right) \text{ .}\]
\end{lemma}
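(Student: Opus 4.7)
The first claim is immediate from the definitions and the orthogonality relations in $\mathcal{A}_n$. Expanding the product $\rho_k^\varepsilon \rho_l^\delta$ with $\varepsilon, \delta \in \{+,-\}$, the cross terms of the form $e \cdot E$ and $E \cdot e$ vanish, while $e_{(i,j)} e_{(i',j')} = \delta_{(i,j),(i',j')} e_{(i,j)}$ and $E_{m,m+k} E_{m',m'+l} = \delta_{m+k,m'} E_{m, m+k+l}$ collapse the remaining sums to $\sum_{i,j} \eta^{i(k+l)} e_{(i,j)} + \varepsilon\delta \sum_m E_{m,m+k+l} = \rho_{k+l}^{\varepsilon\delta}$, which gives the four product formulas at once.

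For the polynomial identities, the plan is to isolate the $e$-subalgebra and invoke a Chebyshev-type identity. By Proposition~\ref{PropChar} applied with $k=1$,
\[Y := \chi\!\left(\Xuv{0}{1}\right) = 2\sum_{s,t\in\Zn}\cos\tfrac{2t\pi}{n}\,e_{(s,t)},\qquad \chi\!\left(\Xuv{u}{v}\right) = 2\sum_{s,t\in\Zn}\eta^{su}\cos\tfrac{2tv\pi}{n}\,e_{(s,t)},\]
and since the $e_{(s,t)}$ are orthogonal idempotents, $Y^k = \sum_{s,t}(2\cos\tfrac{2t\pi}{n})^k e_{(s,t)}$ for every $k\geq 1$. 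The key input is the existence, for each $v\geq 1$, of a monic polynomial $P_v\in\mathbb{Z}[y]$ of degree $v$ with $P_v(2\cos\theta)=2\cos(v\theta)$, namely the rescaled Chebyshev polynomial $P_v(y) = 2T_v(y/2)$, equivalently defined by $P_1(y) = y$, $P_2(y) = y^2-2$, $P_{v+1}(y) = yP_v(y)-P_{v-1}(y)$. An easy induction on this recursion shows that $P_v$ has integer coefficients and only contains monomials of the parity of $v$.

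Substituting $y = 2\cos(2t\pi/n)$ into $P_v$ and summing against the $e_{(s,t)}$ yields $\chi(\Xuv{0}{v}) = Y^v + a_{v-2} Y^{v-2} + \cdots$, where the tail ends in $a_1 Y$ for odd $v$ and in $2 a_0 \sum_{i,j}e_{(i,j)}$ for even $v$, with all $a_i\in\mathbb{Z}$. Multiplying on the left by $\rho_u^\pm$ then produces $\chi(\Xuv{u}{v})$: each $\rho_u^\pm Y^k = 2^k \sum_{s,t}\eta^{su}\cos^k(2t\pi/n)\,e_{(s,t)}$, since the $E$-component of $\rho_u^\pm$ annihilates every $Y^k$ (which lives in the $e$-subalgebra). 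The constant term for even $v$ is handled by rewriting $2\sum_{i,j}e_{(i,j)} = \rho_0^+ + \rho_0^-$ and using part~1: $\rho_u^\pm(\rho_0^++\rho_0^-) = \rho_u^+ + \rho_u^- = 2\sum_{i,j}\eta^{iu} e_{(i,j)}$, which supplies exactly the needed factor. I do not foresee any substantial obstacle: the only point worth verifying carefully is the integrality and parity of the coefficients of $P_v$, both of which are immediate from the Chebyshev recursion; everything else is a direct manipulation based on the orthogonality of the basis of $\mathcal{A}_n$.
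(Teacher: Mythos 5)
Your proof is correct and follows essentially the same route as the paper: componentwise multiplication and orthogonality for the product formulas, the identity $\rho_u^\pm\,\chi\bigl(\Xuv{0}{v}\bigr)=\chi\bigl(\Xuv{u}{v}\bigr)$, and a Chebyshev-type identity with the constant term reinterpreted as a multiple of $\rho_0^+ + \rho_0^-$ rather than the unit. Your use of the rescaled monic polynomials $P_v(y)=2T_v(y/2)$ is only a normalization of the paper's argument with $T_v$, and it has the small merit of making monicity, integrality and the parity of the coefficients immediate from the recursion.
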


\begin{proof}
The first assertion follows from the definition of the componentwise multiplication.

For the second part of the lemma, by the same way, we easily see that we have $\rho_u^\pm \chi\left(\Xuv{0}{v}\right) = \chi\left(\Xuv{u}{v}\right)$. Therefore, we only need to consider the $\chi\left(\Xuv{0}{v}\right)$.

Let us note that the Tchebychev polynomials $T_n$ satisfy, for all real $\theta$ and all natural integer $n$, $T_n(\cos(\theta)) = \cos(n\theta)$ , so we have
\begin{align*}
\chi\left(\Xuv{0}{v}\right) &= 2 \sum_{s,t \in \Zn} \cos\left(\frac{2\pi t v}{n}\right) e_{(s,t)}\\
&= 2 \sum_{s,t \in \Zn} T_v\!\left(\cos\left(\frac{2\pi t}{n}\right)\right) e_{(s,t)}\\
&= 2 \  \tilde{T}_v\!\left( \frac{\chi\left(\Xuv{0}{1}\right)}{2}\right)
\end{align*}
thanks to the componentwise multiplication, where the constant term in $\tilde{T}_v\!\left(\frac{\chi\left(\Xuv{0}{1}\right)}{2}\right)$ is $0$ or $(\rho_0^+ + \rho_0^-) = \sum\limits_{s,t \in \Zn} e_{(s,t)}$ but not $\indic_{\KP_n}$, what we should have if we substitute $\chi\left(\Xuv{0}{1}\right)$ in $T_v(X)$.

Moreover, the Tchebychev polynomial $T_v$ has degree $v$ with leading coefficient $2^{v-1}$, and all its coefficients are integers. It is symmetric if $v$ is even, or antisymmetric if $v$ is odd.
\end{proof}

This Lemma means that
\[\mathcal{C}_n = \alg \left\{\rho_1^+, \rho_1^-, \chi\left(\Xuv{0}{1}\right)\right\} \text{ .}\]

\begin{remark}
Since, when $n$ is odd, we have $(\rho_1^-)^{n+1} = \rho_1^+$, the corresponding $\mathcal{C}_n$ is generated (as an algebra) by $\left\{\rho_1^-, \chi\left(\Xuv{0}{1}\right)\right\}$.
\end{remark}

Thus, by the properties of the characters, they are defined by their values on $\rho_1^+$, $\rho_1^-$ and  $\chi\left(\Xuv{0}{1}\right)$. Moreover, $\sigma(a) = \left\{\omega(a), \omega \in \Omega\right\}$, so $\Omega_n$ is fixed by the spectra of the three elements and some relations. Let us note that $\mathcal{C}_n$ is a subalgebra of $\KP_n$ containing the unit. Hence, the spectrum with respect to $\mathcal{C}_n$ is the spectrum with respect to $\KP_n$. Direct calculations show that:
\begin{align*}
\sigma\left(\chi\left(\Xuv{0}{1}\right)\right) &= \left\{2 \cos\left(\frac{2t\pi}{n}\right), \  t \in \Zn \right\} \cup \{0\} \text{ ,}\\
\sigma\left(\rho_1^+\right) &= \left\{\eta^s,\  s \in \Zn \right\} \text{ ,}\\
\sigma\left(\rho_1^-\right) &= \left\{\eta^s,\  s \in \Zn \right\} \cup \left\{-\eta^s,\  s \in \Zn \right\} \text{ .}
\end{align*}

To determine $\Omega_n$, let us note that
\[(\rho_1^+)^2 = (\rho_1^-)^2 = \rho_2 ^+ \text{ and }\rho_1^+ \chi\left(\Xuv{0}{1}\right) = \rho_1^- \chi\left(\Xuv{0}{1}\right) = \chi\left(\Xuv{1}{1}\right)\]
which leads to the relations
\[\begin{cases}
\forall \omega \in \Omega_n, \ \omega\left(\rho_1^+\right) = \pm \omega\left(\rho_1^-\right)\\
\forall \omega \in \Omega_n, \ \omega\left(\rho_1^+\right) = - \omega\left(\rho_1^-\right) \Rightarrow \omega\left(\chi\left(\Xuv{1}{1}\right)\right) = 0
\end{cases} \text{ .}\]

Finally, we get the following result
\begin{theorem}
\label{ThOmega}
For all $n$, $\mathcal{C}_n$, equipped with the Haar state, can be viewed as an algebra of random variables on the probability space
\begin{multline*}
\Omega_n = \left\{\eta^s,\  s \in \Zn \right\} \times \{1\} \times \left(\left\{2 \cos\left(\frac{2t\pi}{n}\right), \  t \in \Zn \right\} \cup \{0\}\right)\\
\sqcup \left\{-\eta^s,\  s \in \Zn \right\} \times \{-1\} \times \{0\}
\end{multline*}
endowed with the measure
\[\mu_n = \left(\frac{\indic_{4\mathbb{Z}}(n)}{2n(p+1)} + \frac{1-\indic_{4\mathbb{Z}}(n)}{2n(p+2)}\right) \sum_{\substack{\omega = (a,b,c) \in \Omega_n\\b = 1}}\!\!\!\!\delta_\omega + \frac{1}{2n} \sum_{\substack{\omega = (a,b,c) \in \Omega_n\\b = -1}}\!\!\!\!\delta_\omega\]
where $p = \left\lfloor \frac{n}{2}\right\rfloor$ and $\indic_{4\mathbb{Z}}$ is the indicator function of $4\mathbb{Z}$, thanks to the Gelfand transform,
\begin{align*}
\mathcal{F}\colon &\mathcal{C}_n \to L^\infty(\Omega_n, \mu_n)\\
&x \mapsto \left(\hat{x} \colon \omega \mapsto \omega(x)\right)
\end{align*}
given by the following formula
\[\widehat{\rho_1^-} (a,b,c) = a \ , \ \widehat{\rho_1^+} (a,b,c) = ab \ , \ \widehat{\chi\left(\Xuv{0}{1}\right)} (a,b,c) = c \text{ .}\]
\end{theorem}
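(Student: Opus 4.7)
My approach would be to apply Gelfand--Naimark to the finite-dimensional commutative $*$-algebra $\mathcal{C}_n$, using the generators $\rho_1^+$, $\rho_1^-$, $\chi(\Xuv{0}{1})$ identified in the remark preceding the theorem. Any character $\omega$ of $\mathcal{C}_n$ is determined by its values on these three generators. Since $\rho_1^-$ is a unitary one-dimensional corepresentation, $a := \omega(\rho_1^-)$ is nonzero, so I can set $b := \omega(\rho_1^+)/a$ and $c := \omega(\chi(\Xuv{0}{1}))$. The two displayed identities just before the theorem, $(\rho_1^+)^2 = (\rho_1^-)^2$ and $\rho_1^+ \chi(\Xuv{0}{1}) = \rho_1^- \chi(\Xuv{0}{1})$, immediately give $b^2 = 1$ and $(b-1)c = 0$; combined with the previously listed spectra $\sigma(\rho_1^-) = \{\pm\eta^s\}$, $\sigma(\rho_1^+) = \{\eta^s\}$ and $\sigma(\chi(\Xuv{0}{1})) \subset \{2\cos(2\pi t/n) : t \in \Zn\} \cup \{0\}$, this yields exactly the two-leaf description of $\Omega_n$ in the statement, and the Gelfand formulas $\widehat{\rho_1^-}(a,b,c) = a$, $\widehat{\rho_1^+}(a,b,c) = ab$, $\widehat{\chi(\Xuv{0}{1})}(a,b,c) = c$ become tautological from the definitions of $a$, $b$, $c$.

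Next I would verify that every triple in $\Omega_n$ is realized by an actual character. For the $b = 1$ leaf, each minimal projection $e_{(s,j)}$ of the abelian block of $\mathcal{A}_n$ restricts to a character of $\mathcal{C}_n$ sending $\rho_1^\pm \mapsto \eta^s$ and $\chi(\Xuv{0}{1}) \mapsto 2\cos(2\pi j/n)$; varying $s$ and $j$ accounts for the full leaf. For the $b = -1$ leaf, the shift $S = \sum_{i=1}^n E_{i,i+1}$ in the matrix block $\mathcal{M}_n(\CC) \subset \mathcal{A}_n$ has $n$ simple eigenvalues $\eta^k$; its corresponding spectral projections are minimal projections of $\mathcal{C}_n$ and give characters $\rho_1^+ \mapsto \eta^k$, $\rho_1^- \mapsto -\eta^k$, $\chi(\Xuv{0}{1}) \mapsto 0$, exhausting the triples $(-\eta^k, -1, 0)$.

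To extract $\mu_n$, I would use that for each character $\omega$ the atomic weight equals $\mu_n(\{\omega\}) = \hwn(p_\omega)$, with $p_\omega$ the associated minimal projection of $\mathcal{C}_n$. Writing $p_\omega$ explicitly---as a sum of $e_{(s,j')}$'s sharing a common $(\eta^s, 2\cos(2\pi j/n))$-value on the $b = 1$ leaf, or as the rank-one spectral projection $\frac{1}{n} \sum_l \eta^{-kl} S^l$ of $S$ on the $b = -1$ leaf---and combining with $\hwn(e_{(i,j)}) = \frac{1}{2n^2}$ and $\hwn(E_{i,i}) = \frac{1}{2n}$ yields the prescribed normalizing constants $\tfrac{1}{2n(p+1)}$, $\tfrac{1}{2n(p+2)}$ and $\tfrac{1}{2n}$.

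The main obstacle I anticipate is the bookkeeping around the value $c = 0$ on the $b = 1$ leaf, which is precisely what forces the $\indic_{4\mathbb{Z}}(n)$ dichotomy: when $4 \mid n$, the value $0$ is already picked up by $2\cos(2\pi j/n)$ at $j = n/4$, whereas otherwise it has to be adjoined through the explicit $\cup\{0\}$, changing the cardinality of the $c$-set from $p+1$ to $p+2$. Similar care is needed at the boundary values $j = 0$ and, when $n$ is even, $j = n/2$, where the minimal projection consists of a single $e_{(s,j)}$ rather than the paired $e_{(s,j)} + e_{(s,-j)}$; these boundary cases are what ensures the correct matching between the atomic structure of $\mathcal{C}_n$ and the uniform weights appearing in $\mu_n$.
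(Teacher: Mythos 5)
Your first two steps are sound and follow the same route the paper takes implicitly (the paper offers no separate proof beyond the preparatory material): characters of $\mathcal{C}_n$ are determined by their values on $\rho_1^+$, $\rho_1^-$, $\chi\left(\Xuv{0}{1}\right)$, the relations give $b^2=1$ and $(b-1)c=0$, and your two families of characters (evaluation on the abelian block, spectral projections of the shift $S=\sum_i E_{i,i+1}$ in the matrix block) are exactly the points of the spectrum. The gap is in your third step. Computing $\mu_n(\{\omega\})=\hwn (p_\omega)$ as you propose does \emph{not} yield the stated constants: on the $b=1$ leaf the minimal projection is $e_{(s,j)}+e_{(s,-j)}$ (Haar weight $\tfrac{2}{2n^2}=\tfrac{1}{n^2}$) for a paired value $2\cos(2\pi j/n)$, but only $e_{(s,0)}$, resp.\ $e_{(s,n/2)}$ for even $n$ (weight $\tfrac{1}{2n^2}$), at $c=\pm 2$; these weights are not uniform in $c$ and are never equal to $\tfrac{1}{2n(p+1)}$ or $\tfrac{1}{2n(p+2)}$. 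Worse, when $4\nmid n$ the adjoined points $(\eta^s,1,0)$ are not characters of $\mathcal{C}_n$ at all: there is no projection $p_\omega$ to integrate, and evaluation there cannot be multiplicative, since in $\mathcal{C}_n$ one has $\prod_{c_0}\bigl(\chi\left(\Xuv{0}{1}\right)-c_0\bigr)=\tfrac{C}{4}\,(\rho_1^+-\rho_1^-)(\rho_1^+-\rho_1^-)^*$ with $C=\prod_{c_0}(-c_0)\neq 0$ (product over the distinct values $c_0=2\cos(2\pi t/n)$, none of which is $0$ when $4\nmid n$), and the right-hand side must vanish at any point with $b=1$ while the left-hand side would take the value $C\neq 0$ at $c=0$. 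So the sentence ``yields the prescribed normalizing constants'' is precisely where your argument breaks, and your closing claim that the boundary cases $j=0$, $j=n/2$ ensure matching with the \emph{uniform} weights is the opposite of what happens: they are exactly the atoms of half weight.

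Carried out honestly, your method produces a different, non-uniform measure (mass $\tfrac1{n^2}$ at paired cosine values, $\tfrac1{2n^2}$ at $c=\pm2$, $\tfrac1{2n}$ on the $b=-1$ leaf, nothing at the adjoined zeros), and one can check directly that the uniform $\mu_n$ of the statement does not reproduce the Haar state at finite $n$: for odd $n$, $\hwn \chi\left(\Xuv{0}{1}\right)=0$ while $\int \widehat{\chi\left(\Xuv{0}{1}\right)}\,\mathrm{d}\mu_n=\tfrac{1}{n+3}$, and for $4\mid n$, $\hwn \chi\left(\Xuv{0}{1}\right)^2=1$ while the stated $\mu_n$ gives $\tfrac{n+4}{n+2}$. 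The two measures agree only in the limit $n\to\infty$ (both lead to the arcsine-type limits used in the later asymptotic results), so your projection computation, if completed, actually establishes the theorem with corrected weights rather than with the $\mu_n$ as printed; as a proof of the statement as written, the final step fails.
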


\subsection{Asymptotic laws}

\begin{definition}
We say that a complex random variable $Z$ is $\CC$-arcsine$(\alpha)$ distributed, if it admits $z \mapsto \indic_{\alpha\uD}(z) \frac{1}{\pi^2\sqrt{\alpha^2-|z|^2}}$ as density function. Let us denote by $\mu_{\CC-arc(\alpha)}$ the corresponding distribution.
\end{definition}

\begin{lemma}
\label{lemMomCarc}
If $Z$ is a $\CC$-arcsine$(2)$ random variable, for all $k$ and $l$, we have
\[\esp[Z^k \bar{Z}^l] = \begin{cases} \binom{2k}{k} &\text{ if } k = l\\ 0 & \text{ otherwise}\end{cases} \text{ .}\]
\end{lemma}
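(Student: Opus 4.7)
The plan is to compute the expectation directly from the density by switching to polar coordinates. Writing $z = r e^{\imath \theta}$ with $r \in [0, 2]$ and $\theta \in [0, 2\pi)$, we have $z^k \bar{z}^l = r^{k+l} e^{\imath(k-l)\theta}$. Since the density depends only on $|z| = r$, the integral against the density factorises as a product of a purely radial integral and the angular integral $\int_0^{2\pi} e^{\imath(k-l)\theta} \, d\theta$.

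The angular integral equals $2\pi$ when $k = l$ and vanishes otherwise, which immediately yields the off-diagonal vanishing $\esp[Z^k \bar{Z}^l] = 0$ for $k \neq l$. This handles the second case of the lemma with essentially no work.

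For the diagonal case $k = l$, what remains is a radial integral essentially of the form $\int_0^2 \frac{r^{2k}}{\sqrt{4 - r^2}} \, dr$ (up to the explicit constants from the density $1/\pi^2$, the angular factor $2\pi$, and the Jacobian $r$ of the polar area element). The natural substitution $r = 2 \sin \phi$ turns the $\sqrt{4-r^2}$ into $2\cos\phi$, which cancels against the $2\cos\phi \, d\phi$ from $dr$, and reduces the integral to a constant multiple of the Wallis integral $\int_0^{\pi/2} \sin^{2k}\phi \, d\phi = \frac{\pi}{2} \cdot \frac{1}{4^k}\binom{2k}{k}$. The factors of $\pi$ and $4^k$ cancel with the normalising constants and leave exactly the central binomial coefficient $\binom{2k}{k}$, as claimed.

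There is no real obstacle here: the argument is a single polar-coordinate computation, and the non-trivial ingredient is simply the Wallis formula for even powers of sine, which is precisely what produces $\binom{2k}{k}$. The only point that requires a little care is keeping the normalising constants straight so that the final answer has no leftover factors of $\pi$ or powers of $4$.
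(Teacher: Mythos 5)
Your overall route is the same as the paper's: pass to polar coordinates, let the angular integral kill the off-diagonal moments, and evaluate the diagonal radial integral via $r=2\sin\phi$ and the Wallis formula (the paper phrases this last step as recognizing the $2k$th moment of the arcsine law on $[-2\;;\;2]$). The off-diagonal case is indeed immediate. But the one point you defer as ``keeping the normalising constants straight'' is exactly where your argument, read literally, fails: you list the Jacobian $r$ of the polar area element among the constants to be tracked, whereas it is an extra factor of $r$ in the integrand. If the stated density $z\mapsto\indic_{2\uD}(z)\frac{1}{\pi^2\sqrt{4-|z|^2}}$ is integrated against the genuine area element $r\,\mathrm{d}r\,\mathrm{d}\theta$, the diagonal radial integrand is $r^{2k+1}/\sqrt{4-r^2}$, an odd power; the substitution $r=2\sin\phi$ then produces an odd Wallis integral, which contributes no factor of $\pi$ to cancel the prefactor $2\pi/\pi^2$, and one gets $\frac{2^{2k+2}}{\pi}\,\frac{(2k)!!}{(2k+1)!!}$ rather than $\binom{2k}{k}$. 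Already at $k=l=0$ the total mass comes out as $4/\pi\neq 1$, so with the Jacobian included the computation cannot end at the claimed value; the asserted cancellation of $\pi$'s and $4^k$'s only happens for the even-power integral $\int_0^2 r^{2k}/\sqrt{4-r^2}\,\mathrm{d}r$, i.e.\ precisely when the Jacobian is absent.

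The resolution, and what the paper's proof silently does, is that the intended law is that of a uniform phase times an independent arcsine$(-2,2)$ radius, whose density with respect to planar Lebesgue measure carries an extra factor $1/|z|$, namely $\frac{1}{\pi^2|z|\sqrt{4-|z|^2}}$; the definition in the paper omits this factor. Accordingly the paper computes $\frac{1}{\pi^2}\int_0^2\!\int_0^{2\pi}\frac{r^{k+l}e^{\imath(k-l)\theta}}{\sqrt{4-r^2}}\,\mathrm{d}\theta\,\mathrm{d}r$ with no Jacobian, so that the diagonal case is exactly the $2k$th arcsine moment $\binom{2k}{k}$. Your computation becomes correct once you either insert the missing $1/|z|$ in the density (the Jacobian then cancels it) or adopt the paper's Jacobian-free reading of the integral; as written, the step ``the Jacobian is one of the constants that cancel'' does not go through, and carrying it out honestly would instead reveal the normalisation problem in the stated density.
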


\begin{proof}
We need to compute
\[\frac{1}{\pi^2} \int_{0}^2 \int_0^{2\pi} \frac{r^{k+l} e^{\imath (k-l)\theta}}{\sqrt{4 - r^2}}\;\mathrm{d}\theta\;\mathrm{d}r \text{ .}\]
The integral with respect to $\theta$ vanishes, except if $k = l$. In this case, we are left with the $2k$th moment of the arcsine distribution on $[-2\;;\;2]$.
\end{proof}

This helps us to find the $*$-distribution of characters associated to the irreducible representations $\Xuv{u}{v}$.

\begin{theorem}
\label{ThOdd}
For all $u, v \geq 1$, $\chi\left(\Xuv{u}{v}\right)$ is asymptotically (when $n \to +\infty$) a $\left(\frac{1}{2} \delta_0 + \frac{1}{2}\mu_{\CC-arc(2)}\right)$-distributed random variable, and $\chi\left(\Xuv{0}{v}\right)$ admits asymptotically (when $n \to +\infty$) the $*$-distribution $\frac{1}{2} \delta_0 + \frac{1}{2}\mu_{arc(-2,2)}$, where $\mu_{arc(-2,2)}$ represents the classical arcsine distribution on the open interval $(-2\;,\;2)$.

Moreover, the same holds for all $\chi\left(\left(\Xuv{u}{v}\right)^{k}\right)$, with $k$ odd.
\end{theorem}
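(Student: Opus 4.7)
The plan is to compute the joint $*$-moments of $\chi\bigl((\Xuv{u}{v})^k\bigr)$ with respect to the Haar state $\hwn$ and show that they converge, as $n \to \infty$, to the moments of the proposed limit laws. By Proposition~\ref{PropChar}, for odd $k$ the second sum vanishes since the indicator $\indic_{2\mathbb{Z}}(k) = 0$, so we have the explicit formula
\[
\chi\bigl((\Xuv{u}{v})^k\bigr) = 2 \sum_{s,t \in \Zn} \eta^{ksu} \cos\!\left(\frac{2ktv\pi}{n}\right) e_{(s,t)}.
\]
Because the $e_{(s,t)}$ are pairwise orthogonal projections, arbitrary products are immediate to compute, and the adjoint simply conjugates the coefficient, replacing $\eta^{ksu}$ by $\eta^{-ksu}$ (the cosine factor is real).

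For $u \geq 1$, I would compute the mixed moment
\[
\hwn \chi\bigl((\Xuv{u}{v})^k\bigr)^a \, \chi\bigl((\Xuv{u}{v})^k\bigr)^{*\,b}
= \frac{2^{a+b}}{2n^2} \!\!\sum_{s,t \in \Zn}\!\! \eta^{ksu(a-b)} \cos\!\left(\frac{2ktv\pi}{n}\right)^{a+b},
\]
which factors as a product of a sum in $s$ and a sum in $t$. The inner sum in $s$ equals $n$ when $n \mid ku(a-b)$ and vanishes otherwise; for fixed $u,v,k \geq 1$ and $n$ sufficiently large this forces $a = b$, so all off-diagonal $*$-moments vanish asymptotically. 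For the diagonal case $a = b$, what remains is $\frac{2^{2a}}{2n}\sum_{t \in \Zn} \cos(2kvt\pi/n)^{2a}$, which is a Riemann sum converging to
\[
\frac{2^{2a}}{2} \cdot \frac{1}{2\pi}\int_0^{2\pi} \cos(kv\theta)^{2a}\, \mathrm{d}\theta = \frac{2^{2a}}{2} \cdot \frac{1}{4^a}\binom{2a}{a} = \frac{1}{2}\binom{2a}{a}.
\]
By Lemma~\ref{lemMomCarc}, these are exactly the $*$-moments of $\frac{1}{2}\delta_0 + \frac{1}{2}\mu_{\CC-\mathrm{arc}(2)}$, yielding the first assertion by convergence in $*$-moments.

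For $u = 0$ the coefficient $\eta^{ksu} = 1$, so $\chi\bigl((\Xuv{0}{v})^k\bigr)$ is self-adjoint, and its $m$-th moment reduces to
\[
\frac{2^m}{2n}\sum_{t \in \Zn} \cos\!\left(\frac{2ktv\pi}{n}\right)^{m} \xrightarrow[n \to \infty]{} \frac{2^m}{2} \cdot \frac{1}{2\pi}\int_0^{2\pi} \cos(\theta)^m \, \mathrm{d}\theta,
\]
which is $0$ for $m$ odd and $\frac{1}{2}\binom{2a}{a}$ for $m = 2a$. Rescaling the classical arcsine distribution on $[-1,1]$ (whose $2a$-th moment is $\binom{2a}{a}/4^a$) to $[-2,2]$ gives moments $\binom{2a}{a}$, so the limit moments match those of $\frac{1}{2}\delta_0 + \frac{1}{2}\mu_{\mathrm{arc}(-2,2)}$.

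The technical heart of the argument is the Riemann sum convergence together with the orthogonality $\sum_s \eta^{ksu(a-b)} = n\,\indic_{n \mid ku(a-b)}$; the latter is where the hypothesis $u \geq 1$ (resp.\ $u = 0$) bites, since it is what kills (resp.\ preserves) the asymmetric moments. I do not expect any genuine obstacle: once the explicit formula for odd powers is in hand, everything reduces to standard trigonometric integrals, and the proof is essentially bookkeeping of the two cases $u \geq 1$ and $u = 0$.
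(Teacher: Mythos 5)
Your proposal is correct and follows essentially the same route as the paper: compute the mixed $*$-moments from the explicit formula of Proposition~\ref{PropChar}, use the orthogonality $\sum_{s}\eta^{ksu(a-b)}=n\,\indic_{n\mathbb{Z}}(ku(a-b))$ to kill the off-diagonal moments for $u\geq 1$ and large $n$, and match the diagonal moments with Lemma~\ref{lemMomCarc} (resp.\ the rescaled arcsine moments for $u=0$). The only minor variation is that you evaluate the $t$-sum by a Riemann-sum limit, whereas the paper expands $\cos^{r_1+r_*}$ binomially and uses exact root-of-unity orthogonality, which even gives the exact value $\frac{1}{2}\binom{2r}{r}$ for all sufficiently large finite $n$; both yield the same asymptotics.
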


\begin{proof}
We only do the computation for $\chi\left(\Xuv{u}{v}\right)$. The case $\chi\left(\left(\Xuv{u}{v}\right)^{k}\right)$, with $k>1$ odd, is similar.

Let us remember that, by Proposition \ref{PropChar},
\[\chi\left(\Xuv{u}{v}\right) = 2 \sum_{s,t \in \Zn} \eta^{su} \cos\left(\frac{2 t v \pi}{n}\right) e_{(s,t)} \text{ .}\]
Therefore, $\chi\left(\Xuv{u}{v}\right)$ commutes with $\chi\left(\Xuv{u}{v}\right)^*$, and for all $m \geq 1$
\[\left(\chi\left(\Xuv{u}{v}\right)\right)^m = 2^m \sum_{s,t \in \Zn} \eta^{msu} \cos\left(\frac{2 t v \pi}{n}\right)^m e_{(s,t)}\]
so we have
\begin{align*}
\hwn \left(\chi\left(\Xuv{u}{v}\right)\right)&^{r_ 1}\left(\chi\left(\Xuv{u}{v}\right)\right)^{*r_*}\\
&= \frac{2^{r_1+r_*}}{2 n^2} \sum_{s,t \in \Zn} \eta^{(r_1-r_*)su} \cos\left(\frac{2 t v \pi}{n}\right)^{r_1+r_*}\\
&= \frac{1}{2n^2} \left(\sum_{s \in \Zn} \eta^{(r_1-r_*)su}\right)\hspace{-2pt}\left(\sum_{l = 0}^{r_1+r_*} \binom{r_1+r_*}{l}\hspace{-5pt}\sum_{t \in \Zn} \eta^{(2l- (r_1+r_*))tv} \right)\\
&= \frac{1}{2} \indic_{n \mathbb{Z}}((r_1-r_*)u)\sum_{l=0}^{r_1+r_*} \binom{r_1+r_*}{l} \indic_{n \mathbb{Z}}((2l-(r_1+r_*))v) \text{ .}
\end{align*}

For all $u \geq 1$, $r_1 \neq r_*$, $n \geq u(r_1-r_*) + 1$, we have $0 < u(r_1-r_*) < n$, so
\[\hwn \left(\chi\left(\Xuv{u}{v}\right)\right)^{r_ 1}\left(\chi\left(\Xuv{u}{v}\right)\right)^{*r_*} = 0 \text{ .}\]
Otherwise, if $r_1 = r_*= r \geq 1$, for $n$ great enough ($n \geq 2rv + 1$), we obtain
\[\hwn \left(\chi\left(\Xuv{u}{v}\right)\right)^{r}\left(\chi\left(\Xuv{u}{v}\right)\right)^{*r} = \frac{1}{2} \binom{2r}{r} \text{ .}\]
Hence, if $u \neq 0$, $\chi\left(\Xuv{u}{v}\right)$ is asymptotically a $\left(\frac{1}{2} \delta_0 + \frac{1}{2}\mu_{\CC-arc(2)}\right)$-distributed random variable.

If $u = 0$, let us note that $\chi\left(\Xuv{0}{v}\right)$ is selfadjoint, and
\[\hwn \left(\chi\left(\Xuv{u}{v}\right)\right)^{m} = \frac{1}{2} \sum_{l=0}^{m} \binom{m}{l} \indic_{n \mathbb{Z}}((2l-m)v) \text{ .}\]
Hence, for $n$ great enough, if $m$ is odd, the moment vanishes, and if $m = 2p$, it is $\frac{1}{2}\binom{m}{p}$, which corresponds to the distribution $\frac{1}{2} \delta_0 + \frac{1}{2}\mu_{arc(-2,2)}$.
\end{proof}

\begin{theorem}
\label{ThEven}
For all integers $u, v \geq 1$ and any even $k$, $\chi\left(\left(\Xuv{u}{v}\right)^{k}\right)$ is asymptotically (when $n \to +\infty$) a $\left(\frac{1}{2} \mathcal{U}(2\uT) + \frac{1}{2}\mu_{\CC-arc(2)}\right)$-distributed random variable, and $\chi\left(\left(\Xuv{0}{v}\right)^{k}\right)$ admits $\frac{1}{2} \delta_2 + \frac{1}{2}\mu_{arc(-2,2)}$ as asymptotic $*$-distribution (when $n \to +\infty$).
\end{theorem}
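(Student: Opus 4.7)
My plan is to decompose, via Proposition \ref{PropChar}, the $k$th character as $\chi\bigl((\Xuv{u}{v})^k\bigr) = A + B$, where
$$A = 2 \sum_{s,t \in \Zn} \eta^{ksu} \cos\!\left(\frac{2 k t v \pi}{n}\right) e_{(s,t)}, \qquad B = 2 \cos\!\left(\frac{k v^2 \pi }{n}\right)\sum_{r=1}^{n} E_{r, r+ku},$$
and then observe (as in the proof of Lemma \ref{lemDiag}) that every product mixing the $e$-family with the $E$-family is zero. It follows that, for $(r_1,r_*)\neq(0,0)$,
$$\hwn \chi\bigl((\Xuv{u}{v})^k\bigr)^{r_1}\chi\bigl((\Xuv{u}{v})^k\bigr)^{\ast r_*} = \hwn A^{r_1}(A^*)^{r_*} + \hwn B^{r_1}(B^*)^{r_*},$$
so the two pieces can be treated in parallel.

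The $A$-piece is formally the character of Theorem \ref{ThOdd} with $u,v$ replaced by $ku,kv$. Replaying that computation verbatim gives the asymptotic $*$-moments $\tfrac{1}{2}\binom{2r}{r}\indic_{r_1=r_*=r}$ when $u\ge 1$ and $\tfrac{1}{2}\binom{m}{m/2}\indic_{2\mathbb{Z}}(m)$, with $m = r_1+r_*$, when $u=0$; by Lemma \ref{lemMomCarc} these match the moments of $\tfrac{1}{2}\mu_{\CC-arc(2)}$ and $\tfrac{1}{2}\mu_{arc(-2,2)}$ respectively. For the $B$-piece, the matrix-unit identity $(\sum_r E_{r,r+a})(\sum_r E_{r,r+b}) = \sum_r E_{r,r+a+b}$ together with $B^* = 2\cos(kv^2\pi/n)\sum_r E_{r,r-ku}$ collapses
$$B^{r_1}(B^*)^{r_*} = \left(2\cos\!\left(\frac{kv^2\pi}{n}\right)\right)^{r_1+r_*}\sum_{r=1}^n E_{r, r + (r_1-r_*)ku},$$
to which the Haar state assigns $\tfrac{1}{2}\bigl(2\cos(kv^2\pi/n)\bigr)^{r_1+r_*}\indic_{n\mathbb{Z}}\!\bigl((r_1-r_*)ku\bigr)$. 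As $n\to\infty$ the cosine tends to $1$ and the indicator eventually forces $r_1=r_*$ when $u\ge 1$, giving the moments $2^{r_1+r_*}\indic_{r_1=r_*}$ of $2e^{\imath\theta}$ with $\theta$ uniform on $[0,2\pi)$, i.e.\ of $\tfrac{1}{2}\mathcal{U}(2\uT)$; while for $u=0$ the indicator is identically $1$ and the limit $\tfrac{1}{2}\cdot 2^m$ for every $m = r_1+r_*$ recovers the moments of $\tfrac{1}{2}\delta_2$. Adding the two contributions gives the stated mixtures.

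The main obstacle, in my view, is the recognition step for the $B$-part: the vanishing-unless-$r_1=r_*$ pattern arising from $\indic_{n\mathbb{Z}}((r_1-r_*)ku)$ for $u\ge 1$ is exactly the signature of moments of a uniform distribution on a circle, and it is what produces the new $\mathcal{U}(2\uT)$ summand absent from the odd case of Theorem \ref{ThOdd}. Everything else is either a direct re-use of the Theorem \ref{ThOdd} argument or elementary matrix-unit bookkeeping.
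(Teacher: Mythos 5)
Your proposal is correct and takes essentially the same route as the paper: the paper likewise reduces the problem to the new $E$-block summand $2\cos\left(\frac{kv^2\pi}{n}\right)\sum_{r=1}^{n} E_{r,r+ku}$ (the $e$-block piece being handled exactly as in Theorem \ref{ThOdd}, the two blocks being orthogonal), and computes its mixed moments as $\frac{1}{2}\left(2\cos\left(\frac{kv^2\pi}{n}\right)\right)^{\alpha+\beta}\indic_{n\mathbb{Z}}\left(ku(\alpha-\beta)\right)$, which asymptotically gives the $\frac{1}{2}\,\mathcal{U}(2\uT)$ (resp.\ $\frac{1}{2}\,\delta_2$) contribution for $u\geq 1$ (resp.\ $u=0$). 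Your explicit splitting of the moments into the $A$- and $B$-parts is exactly what the paper uses implicitly when it says only the extra summand needs to be studied.
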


\begin{proof}
The only difference with the characters in the preceding theorem is the part "$+ 2 \cos\left(\frac{k v^2 \pi }{n}\right)\sum\limits_{r = 1} ^{n} E_{r, r+ku}$" in the even characters.

By the properties of the multiplication in $\KP_n$ we only need to study it, and show that its moments are asymptotically one half of those for a random variable uniformly distributed on $2 \uT$.

Let us denote this normal matrix by $M_{k, u, v, n}$. Then, for all $m \geq 1$
\[\begin{cases}
M_{k, u, v, n}^m &= 2^m \cos\left(\frac{k v^2 \pi }{n}\right)^m\sum\limits_{r = 1} ^{n} E_{r, r+mku}\\
(M_{k, u, v, n}^*)^m &= 2^m \cos\left(\frac{k v^2 \pi }{n}\right)^m\sum\limits_{r = 1} ^{n} E_{r, r-mku}
\end{cases} \text{ .}\]
Hence, we have
\begin{align*}
\hwn M_{k, u, v, n}^\alpha (M_{k, u, v, n}^*)^\beta &= \frac{1}{2n^2} n 2^{\alpha + \beta} \cos\left(\frac{k v^2 \pi }{n}\right)^{\alpha + \beta} \Tr\left(\sum_{r = 1}^n E_{r, r+ku(\alpha-\beta)} \right)\\
&= \frac{2^{\alpha + \beta}}{2n} \cos\left(\frac{k v^2 \pi }{n}\right)^{\alpha + \beta} \sum_{r = 1}^n \indic_{n\mathbb{Z}}(ku(\alpha-\beta))\\
&= \frac{1}{2} \left(2\cos\left(\frac{k v^2 \pi }{n}\right)\right)^{\alpha + \beta} \indic_{n\mathbb{Z}}(ku(\alpha-\beta)) \text{ .}
\end{align*}

For $u = 0$, we get $\frac{1}{2} \left(2\cos\left(\frac{k v^2 \pi }{n}\right)\right)^{\alpha + \beta}$ which goes to one half of $2^{\alpha + \beta}$.

For $u \geq 1$ and $n$ great enough, this moment goes to one half of the moment of $\mathcal{U}(2 \uT)$, which is zero if $\alpha$ is different from $\beta$, $2^{2\alpha}$ otherwise. This completes the proof of the theorem.
\end{proof}

\begin{remark}
Let us note that $\sum\limits_{r = 1} ^{n} E_{r, r+ku}$ is a permutation matrix, whose eigenvalues are the $n$th roots of unity. Therefore the eigenvalues of the considered matrix $2 \cos\left(\frac{k v^2 \pi }{n}\right)\sum\limits_{r = 1} ^{n} E_{r, r+ku}$ are the $n$th roots of unity multiplied by $2 \cos\left(\frac{k v^2 \pi }{n}\right)$. So when $n$ goes to infinity, the eigenvalues come out uniformly on $2 \uT$ if $u$ is not zero, which corresponds to the result of the  Theorem \ref{ThEven}.
\end{remark}

\begin{remark}
If we let $n$ go to $\infty$ in Theorem \ref{ThOmega}, we see that, $(\Omega_n, \mu_n)$ converges to $(\Omega, \mu)$ where
\[\Omega = \uT \times \{1\} \times [-2\;;\;2] \sqcup \uT \times \{-1\} \times \{0\} \text{ , } \mu = \frac{1}{2} \mu_{\CC-arc(2)} + \frac{1}{2} \mathcal{U}(\uT) \text{ .}\]
And, we can check easily that
\[\widehat{\rho_1^-}, \widehat{\rho_1^+} \sim \mathcal{U}(\uT) \text{ and } \widehat{\chi\left(\Xuv{0}{1}\right)} \sim \frac{1}{2} \mu_{arc(-2,2)} + \frac{1}{2} \delta_0 \text{ .}\]
\end{remark}

\begin{remark}
The same type of computations for the dihedral group $D_{2n}$ gives similar results. The dihedral group $D_{2n}$ admits indeed $\left\lfloor \frac{n-1}{2} \right\rfloor$ two-dimensional non equivalent unitary irreducible representations $\sigma_k$ given by
\[s \mapsto \begin{pmatrix}0&1\\1&0\end{pmatrix} \text{ , } t \mapsto \begin{pmatrix} \eta^k&0\\0&\eta^{-k}\end{pmatrix}\]
where $s$ and $t$ generate $D_{2n}$ with $s$ of order $2$ and $t$ of order $n$. Let us denote by $\chi_{k,l}$ the class function $\Tr\left( \sigma_k(\cdot)^l\right)$. Then by the moments method, the asymptotic law of $\chi_{k,l}$ is $\frac{1}{2} \left(\mu_{arc(-2,2)} + \delta_2\right)$ if $l$ is even and positive, $\frac{1}{2} \left(\mu_{arc(-2,2)} + \delta_0\right)$ if $l$ is odd or $\delta_2$ when $l = 0$.
\end{remark}

\subsection{Asymptotic pairwise independence}

Since we only consider $\chi\hspace{-2pt}\left(\Xuv{u}{v}\right)$ or its adjoint, we work in a commutative setting, and we can use classical cumulants $\kappa$. Let $b$, $d$, $k$, $l$, be natural integers, $a$ and $c$ be non negative integers and $e$, $f$ be in $\{1, *\}$. Then direct calculation leads to
\[\lim_{n \to +\infty} \kappa\left(\chi\left(\left(\Xuv{a}{b}\right)^k\right)^e, \chi\left(\left(\Xuv{c}{d}\right)^l\right)^f\right) = \delta_{ka, lc} \delta_{\{e,f\}, \{1, *\}}\left(\delta_{kb, ld} + 2 \indic_{(2\mathbb{Z})^2}(k,l) \right)\]
which proves
\begin{proposition}
\label{PropIndPair}
Let $a$, $b$, $c$, $d$, $e$, $f$, $k$ and $l$ be as above. The followings are equivalent
\begin{enumerate}
\item $\chi\left(\left(\Xuv{a}{b}\right)^k\right)^e$ and $\chi\left(\left(\Xuv{c}{d}\right)^l\right)^f$ are asymptotically independent
\item at least one of the following conditions holds
\begin{itemize}
\item $ka \neq lc$
\item $ e = f$
\item $k$ or $l$ is odd, and $kb \neq ld$
\end{itemize}
\end{enumerate}
\end{proposition}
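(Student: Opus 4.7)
The approach is to read off the proposition directly from the displayed limit of the order-2 cumulant $\kappa(X,Y) = \esp[XY] - \esp[X]\esp[Y]$, using that asymptotic independence of the pair $(X,Y)$ in this commutative character setting is equivalent to the vanishing of this cumulant. So there are really two things to do: verify the limit formula, and then unwind it as a disjunction of three conditions.

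First I would verify the cumulant formula by a direct mixed-moment computation. Starting from the explicit expression
\[\chi\!\left((\Xuv{a}{b})^k\right) = 2 \sum_{s,t \in \Zn} \eta^{ksa} \cos\!\left(\tfrac{2\pi k t b}{n}\right) e_{(s,t)} + \indic_{2\mathbb{Z}}(k) \cdot 2\cos\!\left(\tfrac{kb^2 \pi}{n}\right) \sum_{r=1}^n E_{r, r+ka}\]
from Proposition \ref{PropChar}, and similarly for the $(c,d,l)$ version, multiplication in $\mathcal{A}_n$ splits into two blocks thanks to the orthogonality $e_{(i,j)} E_{p,q} = 0$. Applying $\hwn$ then turns each $e_{(s,t)}$-sum into an averaged double exponential $\frac{1}{n^2}\sum_{s,t} \eta^{\alpha s + \beta t}$, which is $\indic_{n\mathbb{Z}}(\alpha)\indic_{n\mathbb{Z}}(\beta)$, and for $n$ large enough this forces $\alpha = \beta = 0$; here $\alpha = ka \pm lc$ (sign $-$ iff $\{e,f\} = \{1,*\}$) and $\beta = kb \pm ld$. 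This is where $\delta_{ka,lc}\delta_{\{e,f\},\{1,*\}}$ and $\delta_{kb,ld}$ come from. The matrix block produces the $E_{r, r+ku}$-sum, whose Haar-average equals $\tfrac{1}{n}\cos(\cdots)^{\cdots}\cdot n \cdot\indic_{n\mathbb{Z}}(ka-lc)$ after taking traces, contributing the additive $2\indic_{(2\mathbb{Z})^2}(k,l)$ only when both $k,l$ are even (so that both row-sums are present); the mean term $\esp[X]\esp[Y]$ has no such $E$-contribution and therefore does not cancel it.

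Second, once the limit formula is established, the proposition is a short piece of bookkeeping. The product of three nonnegative factors $\delta_{ka,lc}\cdot\delta_{\{e,f\},\{1,*\}}\cdot\bigl(\delta_{kb,ld} + 2\indic_{(2\mathbb{Z})^2}(k,l)\bigr)$ vanishes iff at least one factor does. The first vanishes iff $ka \neq lc$; the second vanishes iff $\{e,f\}\neq\{1,*\}$, i.e.\ $e = f$; and the third, being a sum of nonnegative integers, vanishes iff both $\delta_{kb,ld} = 0$ and $\indic_{(2\mathbb{Z})^2}(k,l) = 0$, i.e.\ $kb \neq ld$ and at least one of $k, l$ is odd. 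Taking the disjunction yields exactly the three bullet conditions.

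The main obstacle is really step one, and within it, correctly tracking the $E_{r,r+ku}$ cross-terms: they only appear when both powers are even (hence the indicator $\indic_{(2\mathbb{Z})^2}(k,l)$), they contribute an \emph{additive} $2$ rather than a multiplicative factor, and they also force the constraint $ka \equiv lc \pmod n$ via the trace $\Tr(\sum_r E_{r,r+ka-lc})$, so they must be consistent with the constraint already imposed by the $e_{(s,t)}$-block. Once this is checked, the rest is routine orthogonality of exponential sums.
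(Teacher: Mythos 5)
Your proposal follows essentially the same route as the paper: the paper's entire proof consists of the displayed limit of the second-order cumulant, obtained by the same block-by-block moment computation you describe (orthogonality of the $e_{(s,t)}$- and $E_{i,j}$-blocks, exponential sums giving $\indic_{n\mathbb{Z}}$ constraints, the $E$-block supplying the additive $2\indic_{(2\mathbb{Z})^2}(k,l)$), followed by exactly the bookkeeping you carry out to read off when the product of factors vanishes, with both you and the paper taking the vanishing of this single pair cumulant as the criterion for asymptotic independence. One micro-correction: the adjoint does not conjugate the (real) cosine coefficients, so the $t$-sum produces both frequencies $kb+ld$ and $kb-ld$ irrespective of $e,f$ (rather than a sign tied to $\{e,f\}$); since the $kb+ld$ term vanishes for $n$ large this does not change the conclusion.
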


\subsection{Dual groups}
\label{ssDual}
We can define a notion of duality for a finite quantum group $\mathbb{G} = (C(\mathbb{G}), \Delta)$. This dual, denoted $\hat{\mathbb{G}} = (C(\hat{\mathbb{G}}), \hat{\cop})$, is again a finite quantum group. The algebra  $C(\hat{\mathbb{G}})$ is the set of all linear forms defined on $C(\mathbb{G})$. It is also isomorphic to the direct sum of the non-equivalent unitary irreducible corepresentations of $C(\mathbb{G})$, thanks to the Fourier transform. All the structures are defined by duality.

In particular, for the case of the Sekine family, let us denote by $e^{(i,j)}$ and $E^{i,j}$ the elements of the dual basis. Then, we have the coproduct, dual of the multiplication on $\mathcal{A}_n$, given by
\[\hat{\cop}(e^{(i,j)}) = e^{(i,j)} \otimes e^{(i,j)} \ \text{ and } \ \hat{\cop}(E^{i,j})= \sum_{k = 1}^n E^{i,k} \otimes E^{k,j}\]
and the Haar state, dual of the counit on $\mathcal{A}_n$,
\[\int_{\widehat{\KP_n}}\!\!\left(\sum_{i,j \in \Zn} x_{(i,j)}e^{(i,j)} + \sum_{1 \leq i,j \leq n} X_{i,j} E^{i,j}\right) = x_{(0,0)} \text{ .}\]

We clearly obtain that the unitary irreducible representations of $\widehat{\KP_n}$ are the $n^2$ $e^{(i,j)}$'s and the $n$-dimensional representation $\hat{\mathfrak{X}}$, given by its matrix-elements $\hat{\mathfrak{X}}_{i,j} = E^{i,j}$. Let us note that  $\hat{\mathfrak{X}}$ is a fundamental representation of $\widehat{\KP_n}$. Moreover, these representations are non-equivalent, so there is no other non-equivalent unitary irreducible representation.

The characters, $e^{(i,j)}$, $i, j \in \Zn$, and $\chi(\hat{\mathfrak{X}}) = \sum\limits_{i = 1}^{n} E^{i,i}$, generate a commutative algebra, for the product in $C(\widehat{\KP_n})$. This central algebra is the linear span of these characters and may not contain all the traces of powers of $\hat{\mathfrak{X}}$, given for all positive integer $k$ by
\[\chi\left(\hat{\mathfrak{X}}^{k}\right) =
\begin{cases}
\sum\limits_{\substack{s,t \in \Zn\\kt \equiv 0 \!\!\!\mod n}} E^{s,s+t} &\text{ if } k \text { is odd}\\
\sum\limits_{\substack{s,t \in \Zn\\kt \equiv 0 \!\!\!\mod n}} \eta^{-st} e^{(s, t\frac{k}{2})} &\text{ if } k \text { is even}
\end{cases} \text{ .}\]
Note that $\chi\left(\hat{\mathfrak{X}}^{k}\right)$ equals $\chi(\hat{\mathfrak{X}})$ when $k$ is odd and $gcd(k,n) = 1$. If $gcd(k,n)$ is greater than $1$ for some odd $k$, then $\chi\left(\hat{\mathfrak{X}}^{k}\right)$ is not in the character space.

However, these traces are all self-adjoint, and commute. Using the corepresentations basis, we can get that, for all positive integers $k_1, k_2, \ldots, k_r$
\[\int_{\widehat{\KP_n}} \!\!\chi\!\left(\hat{\mathfrak{X}}^{k_1}\right)\!\ldots \chi\!\left(\hat{\mathfrak{X}}^{k_r}\right) = n^{r-2} \frac{1 +\!(-1)^{\sum\limits_{i = 1}^{r} k_i}}{2}\ \#\!\left\{t \in \Zn, \forall 1\leq i \leq r, n\!\mid\!k_i t\right\} \text{ .}\]

In particular, by the moments method, the normalized character $\frac{1}{n}\chi(\hat{\mathfrak{X}})$ admits the $*$-distribution $\frac{1}{2n^2} \left(\delta_{-1} + \delta_{1}\right) + (1 - \frac{1}{n^2}) \delta_0$.
Moreover, let us note that $\#\left\{t \in \Zn, \forall 1\leq i \leq r, n \mid k_i t\right\} = \gcd(n, k_1, \ldots, k_r)$. Thus, for any $k \geq 1$, the normalized trace $\frac{1}{n}\chi(\hat{\mathfrak{X}}^{k})$ admits the $*$-distribution $\frac{\gcd(k,n)}{2n^2} \left(\delta_{-1} + \delta_{1}\right) + (1 - \frac{\gcd(k,n)}{n^2}) \delta_0$ if $k$ is odd, and $\frac{\gcd(k,n)}{n^2} \delta_{1} + (1 - \frac{\gcd(k,n)}{n^2}) \delta_0$ if $k$ is even.

\section*{Acknoledgment.} The author is grateful to Professor Uwe Franz for useful conversations and to the anonymous referee whose comments help to improve this article. This work was supported by the French "Investissements d'Avenir" program, project ISITE-BFC (contract ANR-15-IDEX-03)

\end{document}